\documentclass[12pt, dvipdfmx]{article}
\usepackage[mathscr]{eucal}
\usepackage{amssymb}
\usepackage{latexsym}
\usepackage{amsthm}
\usepackage{amsmath}
\usepackage[dvips]{graphicx}
\usepackage{psfrag}
\usepackage{a4wide}
\usepackage{tikz-cd}

\newtheorem{defn}{Definition}[section]
\newtheorem{thm}{Theorem}[section]

\newtheorem{lem}{Lemma}[section]
\newtheorem{rem}{Remark}[section]
\newtheorem{ex}{Example}[section]
\newtheorem{cor}{Corollary}[section]
\newtheorem{prop}{Proposition}[section]

\numberwithin{equation}{section}
\setcounter{section}{0}

\newcommand{\C}{{\mathbb C}}
\newcommand{\D}{{\mathbb D}}

\newcommand{\cM}{{\mathcal M}}
\newcommand{\cD}{{\mathcal D}}
\newcommand{\cS}{{\mathcal S}}
\newcommand{\cP}{{\mathcal P}}
\newcommand{\cQ}{{\mathcal Q}}
\newcommand{\cH}{{\mathcal H}}
\newcommand{\cK}{{\mathcal K}}
\newcommand{\ran}{\operatorname{ran}}
\newcommand{\la}{\langle}
\newcommand{\ra}{\rangle}
\newcommand{\p}{\partial}
\newcommand{\lam}{\lambda}
\newcommand{\aut}{\operatorname{Aut}}

\begin{document}

\title{Indefinite Schwarz-Pick inequalities on the bidisk\\
(application of the theory of analytic Hilbert modules)
}
\author{
{\sc Michio SETO}\\
[1ex]
{\small National Defense Academy,  
Yokosuka 239-8686, Japan} \\
{\small 
{\it E-mail address}: {\tt mseto@nda.ac.jp}}\\
}

\date{}

\maketitle
\begin{abstract}
Indefinite Schwarz-Pick inequalities for holomorphic self-maps of the bidisk are given 
as application of the spectral theory on Hilbert modules. 
\end{abstract}

\begin{center}
2010 Mathematical Subject Classification: Primary 46E22; Secondary 47B32\\
keywords: Schwarz-Pick inequality, Hilbert module, Hardy space
\end{center}

\section{Introduction}

The classical Schwarz-Pick inequality is 
fundamental in complex analysis and hyperbolic geometry, 
and also its functional analysis aspect has attracted a lot of interest.  
For example, Banach space theory related to the geometry derived from Schwarz-Pick inequality 
can be seen in Dineen~\cite{Dineen}. 
In Hilbert space operator theory, 
Schwarz-Pick inequalities for holomorphic functions of one and several variables 
were discussed by Anderson-Rovnyak~\cite{AR}, Anderson-Dritschel-Rovnyak~\cite{ADR}, 
Knese~\cite{Knese} and MacCluer-Stroethoff-Zhao~\cite{MSZ1,MSZ2} 
in the context of Pick interpolation, 
realization formula, de Branges-Rovnyak space and composition operator. 
Now, the purpose of this paper is to give 
some variants of Schwarz lemma and Schwarz-Pick inequality 
for the bidisk. 
Here the author would like to emphasize the following three points: 
\begin{enumerate}
\item we deal with holomorphic self-maps of the bidisk, 
\item our inequalities are indefinite in a certain sense, 
\item our method is based on the theory of analytic Hilbert modules. 
\end{enumerate}  

We shall introduce the language of the theory of Hilbert modules 
in the Hardy space over the bidisk. 
Let $\D$ be the open unit disk in the complex plane $\C$, 
$H^2$ be the Hardy space over the bidisk $\D^2$, 
and $H^{\infty}$ be the Banach algebra consisting 
of all bounded holomorphic functions on $\D^2$. 
Then $H^2$ is a Hilbert module over $H^{\infty}$, that is, 
$H^2$ is a Hilbert space invariant under multiplication of functions in $H^{\infty}$. 
A closed subspace $\cM$ of $H^2$ is called a submodule if 
$\cM$ is invariant under the module action. 
Comparing with the theory of the Hardy space over the unit disk $\D$,  
structure of submodules in $H^2$ is very complicated. 
However, there are some well-behaved classes of submodules in $H^2$. 
One of those classes was introduced by Izuchi, Nakazi and the author in \cite{INS}, 
and those members are said to be of INS type.    
In this paper, as an application of spectral theory on submodules of INS type, 
the following Schwarz-Pick type inequalities will be given 
(Theorem \ref{thm:5-1} and Theorem \ref{thm:5-2}): 
if $\psi=(\psi_1,\psi_2)$ is a holomorphic self-map on $\D^2$, then 
\[
0\leq d (\psi(z),\psi(w))\leq \sqrt{2}d (z,w)<\sqrt{2} \quad (z,w\in \D^2),
\]
where we set
\[
d(z,w)=\sqrt{\left|\frac{z_1-w_1}{1-\overline{w_1}z_1}\right|^2
+\left|\frac{z_2-w_2}{1-\overline{w_2}z_2}\right|^2
-\left|\frac{z_1-w_1}{1-\overline{w_1}z_1}\cdot\frac{z_2-w_2}{1-\overline{w_2}z_2}\right|^2} 
\]
for $z=(z_1,z_2)$ and $w=(w_1,w_2)$ in $\D^2$. 
Further, 
if $\psi$ belongs to a certain class defined in Section 2, 
then 
\[
0\leq d (\psi(z),\psi(w))\leq d (z,w)<1\quad (z,w\in \D^2).
\]

This paper contains four sections. 
Section 1 is this introduction. 
In Section 2, three classes of tuples of holomorphic functions on $\D^2$ are defined, 
and we show they are non-trivial. 
In Section 3, indefinite variants of Schwarz lemma are given 
with Hilbert space operator theory. 
In Section 4, 
as application of the theory of analytic Hilbert modules,  
indefinite variants of Schwarz-Pick inequality are given. 

\section{Schur-Drury-Agler class}

Let $k_{\lam}$ denote the reproducing kernel of $H^2$ at $\lam$ in $\D^2$, 
that is, 
\[
k_{\lam}(z)=\frac{1}{(1-\overline{\lam_1}z_1)(1-\overline{\lam_2}z_2)}
\quad (z=(z_1,z_2),\lam=(\lam_1,\lam_2)\in \D^2).
\]
Then we set 
\[
\cD=\left\{ \sum_{\lam} c_{\lam}k_{\lam}\ \mbox{(a finite sum)}: \lam\in \D^2, c_{\lam}\in \C \right\},
\]
the linear space generated by all reproducing kernels of $H^2$. 
We shall consider unbounded Toeplitz operators with symbols in $H^2$. 
Let $f$ be a function in $H^2$. 
Then $T_f$ denotes the multiplication operator of $f$, 
where we fix $\cD$ for the domain of $T_f$ .  
Then, since
\[
\la k_{\lam},T_fk_{\mu} \ra=
\la \overline{f(\lam)}k_{\lam},k_{\mu} \ra\quad (\lam,\mu \in \D^2),
\]
$T_f^{\ast}$ is defined on $\cD$ and 
\[
T_f^{\ast}k_{\lam}=\overline{f(\lam)}k_{\lam}\quad (\lam \in \D^2). 
\]

\begin{defn}\label{defn:2-1}
Let $m$ and $n$ be non-negative integers. 
We consider a tuple 
\[
\Phi_{m,n}=(\varphi_1,\ldots, \varphi_m, \varphi_{m+1},\varphi_{m+n})
\]
of $m+n$ holomorphic functions in $H^2$. 
Then $\cS(\D;m,n)$ denotes the set of all $\Phi_{m,n}$'s 
satisfying the following operator inequality on $\cD$:
\[0\leq 
\sum_{j=1}^mT_{\varphi_j}T_{\varphi_j}^{\ast}
-\sum_{k=m+1}^{m+n}T_{\varphi_k}T_{\varphi_k}^{\ast}\leq I.
\]
Equivalently, 
$\Phi_{m,n}$ belongs to $\cS(\D;m,n)$ if and only if
\[0\leq 
\dfrac{\sum_{j=1}^m\overline{\varphi_j(\lam)}\varphi_j(z)
-\sum_{k=m+1}^{m+n}\overline{\varphi_k(\lam)}\varphi_k(z)}
{(1-\overline{\lam_1}z_1)(1-\overline{\lam_2}z_2)}
\leq \frac{1}{(1-\overline{\lam_1}z_1)(1-\overline{\lam_2}z_2)}
\]
as kernel functions. 
\end{defn}
 
Since the author has been influenced by Drury~\cite{Drury}, 
in our paper,  
we would like to call $\cS(\D^2;m,n)$ a Schur-Drury-Agler calss of $\D^2$. 
Here two remarks are given. 
First, unbounded functions are not excluded from $\cS(\D^2;m,n)$
(cf. Definition 1 in Jury~\cite{Jury2} for the Drury-Arveson space). 
Throughout this paper, a triplet $(\varphi_1,\varphi_2,\varphi_3)$ 
consisting of functions in $H^{\infty}$ 
will be said to be bounded. 
Second, $\cS(\D^2;m,n)$ is more restricted than 
the class consisting of tuples of functions in $H^2$ satisfying the operator inequality
\[I- 
\sum_{j=1}^mT_{\varphi_j}T_{\varphi_j}^{\ast}
+\sum_{k=m+1}^{m+n}T_{\varphi_k}T_{\varphi_k}^{\ast}\geq 0. 
\]
In this paper, we will focus on the case where $m=2$ and $n=1$, that is,
\[
\cS(\D^2;2,1)=\{(\varphi_1,\varphi_2,\varphi_3)\in (\operatorname{Hol}(\D^2))^3:
0\leq T_{\varphi_1}T_{\varphi_1}^{\ast}+T_{\varphi_2}T_{\varphi_2}^{\ast}
-T_{\varphi_3}T_{\varphi_3}^{\ast}\leq I\}.
\]
This class is closely related to submodules of rank 3 
(see Wu-S-Yang~\cite{WSY} and Yang~\cite{Yang}). 
Further, we define other two classes as follows:
\begin{align*}
\cP(\D^2;2,1)&=\{(\varphi_1,\varphi_2,\varphi_3)\in (\operatorname{Hol}(\D^2))^3:
T_{\varphi_1}T_{\varphi_1}^{\ast}+T_{\varphi_2}T_{\varphi_2}^{\ast}
-T_{\varphi_3}T_{\varphi_3}^{\ast}\geq 0\},\\
\cQ(\D^2;2,1)&=\{(\varphi_1,\varphi_2,\varphi_3)\in (\operatorname{Hol}(\D^2))^3:
I-T_{\varphi_1}T_{\varphi_1}^{\ast}-T_{\varphi_2}T_{\varphi_2}^{\ast}
+T_{\varphi_3}T_{\varphi_3}^{\ast}\geq 0\}.
\end{align*}
Trivially, $\cP(\D^2;2,1)\cap \cQ(\D^2;2,1)=\cS(\D^2;2,1)$. 
First, we shall give examples of elements of  $\cS(\D^2;2,1)$. 

\begin{ex}\label{ex:2-1}\rm 
Let $\varphi_1=\varphi_1(z_1)$ 
and $\varphi_2=\varphi_2(z_2)$ be holomorphic functions of single variable. 
If $\|\varphi_1\|_{\infty}\leq 1$ and $\|\varphi_2\|_{\infty}\leq 1$, then 
$(\varphi_1,\varphi_2,\varphi_1\varphi_2)$ belongs to $\cS(\D^2;2,1)$.  
Indeed, since $T_{\varphi_1}$ and $T_{\varphi_2}$ are doubly commuting contractions, 
\[
I-T_{\varphi_1}T_{\varphi_1}^{\ast}-T_{\varphi_2}T_{\varphi_2}^{\ast}
+T_{\varphi_1\varphi_2}T_{\varphi_1\varphi_2}^{\ast}
=(I-T_{\varphi_1}T_{\varphi_1}^{\ast})(I-T_{\varphi_2}T_{\varphi_2}^{\ast})\geq 0,
\]
and 
\[
T_{\varphi_1}T_{\varphi_1}^{\ast}+T_{\varphi_2}T_{\varphi_2}^{\ast}
-T_{\varphi_1\varphi_2}T_{\varphi_1\varphi_2}^{\ast}
=T_{\varphi_1}T_{\varphi_1}^{\ast}
+T_{\varphi_2}(I-T_{\varphi_1}T_{\varphi_1}^{\ast})T_{\varphi_2}^{\ast}\geq 0.
\]
In particular, $(z_1,z_2,z_1z_2)$ belongs to $\cS(\D^2;2,1)$ and 
\[
T_{z_1}T_{z_1}^{\ast}+T_{z_2}T_{z_2}^{\ast}-T_{z_1z_2}T_{z_1z_2}^{\ast}
\]
is the orthogonal projection of $H^2$ onto the submodule generated by $z_1$ and $z_2$.   
\end{ex}

\begin{ex}\label{ex:2-2}\rm 
Let $\psi(z)=(\psi_1(z),\psi_2(z))$ be a holomorphic self-map of $\D^2$. 
Then, trivially, 
$\ran T_{\psi_1\psi_2/\sqrt{2}}$ is a subspace of
$\ran T_{\psi_1}$. 
Hence, by the Douglas range inclusion theorem and $\|T_{\psi_j}\|\leq 1$, we have
\[
0\leq 
T_{\psi_1\psi_2/\sqrt{2}}T_{\psi_1\psi_2/\sqrt{2}}^{\ast}
\leq \frac{1}{2}T_{\psi_1}T_{\psi_1}^{\ast}
\leq T_{\psi_1}T_{\psi_1}^{\ast}+T_{\psi_2}T_{\psi_2}^{\ast}\leq 2I. 
\]
Therefore, we have
\begin{align*}
0
&\leq \dfrac{1}{2}( T_{\psi_1}T_{\psi_1}^{\ast}+T_{\psi_2}T_{\psi_2}^{\ast}
-T_{\psi_1\psi_2/\sqrt{2}}T_{\psi_1\psi_2/\sqrt{2}}^{\ast})\\
&=T_{\psi_1/\sqrt{2}}T_{\psi_1/\sqrt{2}}^{\ast}+T_{\psi_2/\sqrt{2}}T_{\psi_2/\sqrt{2}}^{\ast}
-T_{\psi_1\psi_2/2}T_{\psi_1\psi_2/2}^{\ast}\\ 
&\leq T_{\psi_1/\sqrt{2}}T_{\psi_1/\sqrt{2}}^{\ast}+T_{\psi_2/\sqrt{2}}T_{\psi_2/\sqrt{2}}^{\ast}\\
&\leq I.
\end{align*}
Thus 
$(\psi_1/\sqrt{2},\psi_2/\sqrt{2},\psi_1\psi_2/2)$ belongs to $\cS(\D^2;2,1)$ 
for any holomorphic self-map $(\psi_1,\psi_2)$ of $\D^2$.
\end{ex}

\begin{ex}\rm
Further non-trivial examples of elements in $\cS(\D^2;2,1)$ 
related to the theory of Hilbert modules in $H^2$ 
can be obtained from Theorem 3.3 in Wu-S-Yang~\cite{WSY}.
\end{ex}

$\cP(\D^2;2,1)$ and $\cQ(\D^2;2,1)$ are closed 
under composition of elements in $\cQ(\D^2;2,1)$ 
in the following sense (cf. Theorem 2 in Jury~\cite{Jury2}). 

\begin{thm}\label{thm:2-1}
Let $(\varphi_1,\varphi_2,\varphi_3)$ be a triplet in 
$\cP(\D^2;2,1)$ {\rm(}resp. $\cQ(\D^2;2,1)${\rm)}, 
and $\psi=(\psi_1,\psi_2)$ be a holomorphic self-map of $\D^2$. 
If $(\psi_1,\psi_2,\psi_1\psi_2)$ belongs to $\cQ(\D^2;2,1)$,  
then 
$(\varphi_1\circ \psi, \varphi_2\circ \psi, \varphi_3\circ \psi)$ belongs to 
$\cP(\D^2;2,1)$ {\rm(}resp. $\cQ(\D^2;2,1)${\rm)}. 
\end{thm}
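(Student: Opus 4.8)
The plan is to pass entirely to the language of positive semi-definite kernels, so that ``composition with $\psi$'' becomes the combination of two elementary operations: pulling a kernel back along the self-map $\psi$, and forming a Schur product. Write
\[
N_{\cP}(z,\lam)=\overline{\varphi_1(\lam)}\varphi_1(z)+\overline{\varphi_2(\lam)}\varphi_2(z)-\overline{\varphi_3(\lam)}\varphi_3(z),
\qquad N_{\cQ}=1-N_{\cP}.
\]
As in the ``equivalently'' clause of Definition \ref{defn:2-1}, the triplet $(\varphi_1,\varphi_2,\varphi_3)$ lies in $\cP(\D^2;2,1)$ (resp. $\cQ(\D^2;2,1)$) if and only if
\[
\frac{N_{\cP}(z,\lam)}{(1-\overline{\lam_1}z_1)(1-\overline{\lam_2}z_2)}
\qquad\Bigl(\text{resp. } \frac{N_{\cQ}(z,\lam)}{(1-\overline{\lam_1}z_1)(1-\overline{\lam_2}z_2)}\Bigr)
\]
is a positive semi-definite kernel on $\D^2\times\D^2$.

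First I would observe that, since $\psi=(\psi_1,\psi_2)$ maps $\D^2$ into $\D^2$, each $\varphi_j\circ\psi$ is holomorphic on $\D^2$ and $1-\overline{\psi_j(\lam)}\psi_j(z)$ never vanishes, so the substitution $(z,\lam)\mapsto(\psi(z),\psi(\lam))$ is legitimate. Pulling the positive semi-definite kernel furnished by the hypothesis on $(\varphi_1,\varphi_2,\varphi_3)$ back along $\psi$ --- a standard operation that preserves positive semi-definiteness --- shows that
\[
\frac{N_{\cP}(\psi(z),\psi(\lam))}{(1-\overline{\psi_1(\lam)}\psi_1(z))(1-\overline{\psi_2(\lam)}\psi_2(z))}
\]
is positive semi-definite on $\D^2\times\D^2$, and likewise with $N_{\cQ}$ in place of $N_{\cP}$.

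Next I would unwind the remaining hypothesis. Using the factorization
\[
1-\overline{\psi_1(\lam)}\psi_1(z)-\overline{\psi_2(\lam)}\psi_2(z)+\overline{\psi_1(\lam)\psi_2(\lam)}\,\psi_1(z)\psi_2(z)
=(1-\overline{\psi_1(\lam)}\psi_1(z))(1-\overline{\psi_2(\lam)}\psi_2(z)),
\]
the condition $(\psi_1,\psi_2,\psi_1\psi_2)\in\cQ(\D^2;2,1)$ says exactly that
\[
\frac{(1-\overline{\psi_1(\lam)}\psi_1(z))(1-\overline{\psi_2(\lam)}\psi_2(z))}{(1-\overline{\lam_1}z_1)(1-\overline{\lam_2}z_2)}
\]
is a positive semi-definite kernel. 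Taking the Schur product of this kernel with the one displayed in the previous paragraph, the factor $(1-\overline{\psi_1(\lam)}\psi_1(z))(1-\overline{\psi_2(\lam)}\psi_2(z))$ cancels and we are left with
\[
\frac{N_{\cP}(\psi(z),\psi(\lam))}{(1-\overline{\lam_1}z_1)(1-\overline{\lam_2}z_2)},
\]
which is positive semi-definite by the Schur product theorem. Since $N_{\cP}(\psi(z),\psi(\lam))=\overline{(\varphi_1\circ\psi)(\lam)}(\varphi_1\circ\psi)(z)+\overline{(\varphi_2\circ\psi)(\lam)}(\varphi_2\circ\psi)(z)-\overline{(\varphi_3\circ\psi)(\lam)}(\varphi_3\circ\psi)(z)$, this is precisely the assertion that $(\varphi_1\circ\psi,\varphi_2\circ\psi,\varphi_3\circ\psi)\in\cP(\D^2;2,1)$. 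Running the same argument with $N_{\cQ}$ in place of $N_{\cP}$ (note $N_{\cQ}(\psi(z),\psi(\lam))=1-N_{\cP}(\psi(z),\psi(\lam))$ is the corresponding $\cQ$-numerator for $\varphi\circ\psi$) gives the $\cQ$ case.

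All of the ingredients here are routine: the equivalence of the operator and kernel formulations (the same reasoning as in Definition \ref{defn:2-1}), the invariance of positive semi-definiteness under pullback along a map, and the Schur product theorem. The only point requiring care --- and the reason the precise hypothesis $(\psi_1,\psi_2,\psi_1\psi_2)\in\cQ(\D^2;2,1)$ is the one needed --- is the bookkeeping of denominators: the pullback step manufactures the factor $(1-\overline{\psi_1(\lam)}\psi_1(z))(1-\overline{\psi_2(\lam)}\psi_2(z))$ in the denominator, and to cancel it and recover the denominator $(1-\overline{\lam_1}z_1)(1-\overline{\lam_2}z_2)$ of the reproducing kernel $k_{\lam}$ of $H^2$ one needs a positive semi-definite kernel with exactly that product in its numerator, which is what membership of $(\psi_1,\psi_2,\psi_1\psi_2)$ in $\cQ(\D^2;2,1)$ delivers.
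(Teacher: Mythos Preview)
Your proof is correct and is essentially the same argument as the paper's: both factor the target kernel as a Schur (pointwise) product of the pullback $\dfrac{N_{\cP}(\psi(z),\psi(\lam))}{(1-\overline{\psi_1(\lam)}\psi_1(z))(1-\overline{\psi_2(\lam)}\psi_2(z))}$ with the kernel $\dfrac{(1-\overline{\psi_1(\lam)}\psi_1(z))(1-\overline{\psi_2(\lam)}\psi_2(z))}{(1-\overline{\lam_1}z_1)(1-\overline{\lam_2}z_2)}$ supplied by the $\cQ$ hypothesis, and then appeal to the Schur product theorem. The paper writes this out as a quadratic form on finite sums of reproducing kernels (its ratio $\langle k_{\lam_i},k_{\lam_j}\rangle/\langle k_{\psi(\lam_i)},k_{\psi(\lam_j)}\rangle$ is exactly your second factor) and calls the last step ``Schur's theorem,'' but the content is identical.
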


\begin{proof}
We set
\[
\Phi (z,\lam)=
\overline{\varphi_1(\lam)}\varphi_1(z)
+\overline{\varphi_2(\lam)}\varphi_2(z)
-\overline{\varphi_3(\lam)}\varphi_3(z).
\]
If $(\varphi_1,\varphi_2,\varphi_3)$ belongs to $\cP(\D^2;2,1)$, 
then, for any $\lam_1,\ldots ,\lam_n$ in $\D^2$,  we have
\begin{align*}
&\quad\ \la (T_{\varphi_1\circ \psi}T_{\varphi_1\circ \psi}^{\ast}
+T_{\varphi_2\circ \psi}T_{\varphi_2\circ \psi}^{\ast}
-T_{\varphi_3\circ \psi}T_{\varphi_3\circ \psi}^{\ast})
\sum_{i=1}^nc_ik_{\lam_i}, \sum_{j=1}^nc_jk_{\lam_j} \ra\\
&=\sum_{i,j=1}^nc_i\overline{c_j}\Phi(\psi(\lam_j),\psi(\lam_i))\la k_{\lam_i},k_{\lam_j} \ra\\
&=\sum_{i,j=1}^nc_i\overline{c_j}\Phi(\psi(\lam_j),\psi(\lam_i)) \la k_{\psi(\lam_i)},k_{\psi(\lam_j)} \ra
\frac{\la k_{\lam_i},k_{\lam_j} \ra}{\la k_{\psi(\lam_i)},k_{\psi(\lam_j)} \ra}\\
&=\sum_{i,j=1}^nc_i\overline{c_j}
\la (T_{\varphi_1}T_{\varphi_1}^{\ast}+T_{\varphi_2}T_{\varphi_2}^{\ast}
-T_{\varphi_3}T_{\varphi_3}^{\ast})k_{\psi(\lam_i)}, k_{\psi(\lam_j)} \ra
\frac{\la k_{\lam_i},k_{\lam_j} \ra}{\la k_{\psi(\lam_i)},k_{\psi(\lam_j)} \ra}.
\end{align*}
By the definition of $\cQ(\D^2;2,1)$ and Schur's theorem, we have 
\[
T_{\varphi_1\circ \psi}T_{\varphi_1\circ \psi}^{\ast}
+T_{\varphi_2\circ \psi}T_{\varphi_2\circ \psi}^{\ast}
-T_{\varphi_3\circ \psi}T_{\varphi_3\circ \psi}^{\ast}\geq 0.
\] 
Hence,  
$(\varphi_1\circ \psi, \varphi_2\circ \psi, \varphi_3\circ \psi)$ belongs to 
$\cP(\D^2;2,1)$. 
Similarly, 
considering $1-\Phi$, 
we have the statement on $\cQ(\D^2;2,1)$.  
\end{proof}

\begin{cor}\label{cor:2-1}
Let $(\varphi_1,\varphi_2,\varphi_3)$ be a triplet in $\cS(\D^2;2,1)$, 
and let $\psi=(\psi_1,\psi_2)$ be a holomorphic self-map of $\D^2$. 
If $(\psi_1,\psi_2,\psi_1\psi_2)$ belongs to $\cQ(\D^2;2,1)$, 
then $(\varphi_1\circ \psi, 
\varphi_2\circ \psi, \varphi_3\circ \psi)$ belongs to $\cS(\D^2;2,1)$.
\end{cor}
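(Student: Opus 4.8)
Corollary \ref{cor:2-1} is a direct consequence of Theorem \ref{thm:2-1}, obtained simply by intersecting the two conclusions. I would argue as follows. Suppose $(\varphi_1,\varphi_2,\varphi_3)$ belongs to $\cS(\D^2;2,1)$. By the trivial identity $\cS(\D^2;2,1)=\cP(\D^2;2,1)\cap \cQ(\D^2;2,1)$ noted just before Example \ref{ex:2-1}, the triplet $(\varphi_1,\varphi_2,\varphi_3)$ lies in both $\cP(\D^2;2,1)$ and $\cQ(\D^2;2,1)$. The hypothesis of Corollary \ref{cor:2-1} is exactly the hypothesis of Theorem \ref{thm:2-1}: $(\psi_1,\psi_2,\psi_1\psi_2)\in \cQ(\D^2;2,1)$ and $\psi$ is a holomorphic self-map of $\D^2$.

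Now I would apply Theorem \ref{thm:2-1} twice with the same self-map $\psi$. Applying the ``$\cP$'' case to $(\varphi_1,\varphi_2,\varphi_3)\in \cP(\D^2;2,1)$ yields that $(\varphi_1\circ\psi,\varphi_2\circ\psi,\varphi_3\circ\psi)$ belongs to $\cP(\D^2;2,1)$; applying the ``$\cQ$'' case to $(\varphi_1,\varphi_2,\varphi_3)\in \cQ(\D^2;2,1)$ yields that the same composed triplet belongs to $\cQ(\D^2;2,1)$. Intersecting, $(\varphi_1\circ\psi,\varphi_2\circ\psi,\varphi_3\circ\psi)\in \cP(\D^2;2,1)\cap\cQ(\D^2;2,1)=\cS(\D^2;2,1)$, which is the claim.

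The only point requiring a word of care is that the compositions $\varphi_j\circ\psi$ are again holomorphic functions on $\D^2$ (so that they are legitimate candidates for membership in the classes), but this is immediate since $\psi$ maps $\D^2$ into $\D^2$ and each $\varphi_j$ is holomorphic on $\D^2$; this is already implicit in the statement of Theorem \ref{thm:2-1}. There is genuinely no obstacle here: the substantive content — the Schur-product argument that transports the kernel positivity of $1-\Phi$ through the change of variables $z\mapsto\psi(z)$, controlled by the ratios $\la k_{\lam_i},k_{\lam_j}\ra/\la k_{\psi(\lam_i)},k_{\psi(\lam_j)}\ra$ — is entirely absorbed into Theorem \ref{thm:2-1}. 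So the proof is a two-line corollary, and I would present it as such.
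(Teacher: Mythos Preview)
Your proposal is correct and matches the paper's approach exactly: the paper states Corollary \ref{cor:2-1} without proof immediately after Theorem \ref{thm:2-1}, treating it as an immediate consequence via the identity $\cS(\D^2;2,1)=\cP(\D^2;2,1)\cap\cQ(\D^2;2,1)$. Your two applications of Theorem \ref{thm:2-1} followed by intersection is precisely the intended argument.
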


\section{Indefinite Schwarz lemmas}

In this section, we shall give inequalities which can be seen as variants of Schwarz lemma. 
We need several lemmas.  
\begin{lem}\label{lem:3-1}
Let $T$ be a non-negative bounded linear operator, 
and $P$ be an orthogonal projection on a Hilbert space $\cH$. 
If there exists some constant $c>0$ such that $0\leq T\leq cP$, then we may take 
$c=\|T\|$. 
\end{lem}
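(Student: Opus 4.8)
The plan is to exploit the fact that $P$ is a projection, so $cP$ has norm exactly $c$ on its range and is zero on $\ker P = (\ran P)^\perp$, and to show that the smallest admissible $c$ is forced to be $\|T\|$. First I would observe that the hypothesis $0 \le T \le cP$ immediately gives $\|T\| \le \|cP\| = c$ (since $\|P\| = 1$ when $P \ne 0$, and if $P = 0$ then $T = 0$ and there is nothing to prove), so one direction is free: any valid $c$ satisfies $c \ge \|T\|$. The real content is the reverse, namely that $c = \|T\|$ is itself admissible, i.e.\ that $T \le \|T\| P$.

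For that, the key step is to locate the range of $T$ inside $\ran P$. From $T \le cP$ we get $\langle T h, h\rangle \le c\langle P h, h\rangle = c\|Ph\|^2$ for every $h \in \cH$; in particular, if $h \in \ker P$ then $\langle T h, h\rangle \le 0$, and since $T \ge 0$ this forces $\langle T h, h \rangle = 0$, hence $T^{1/2}h = 0$, hence $Th = 0$. Thus $\ker P \subseteq \ker T$, equivalently $\ran T \subseteq \overline{\ran T} = (\ker T)^\perp \subseteq (\ker P)^\perp = \ran P$. Consequently $PT = T = TP$, and the decomposition $\cH = \ran P \oplus \ker P$ reduces both $T$ and $P$; on $\ker P$ both operators vanish, while on $\ran P$ the operator $P$ acts as the identity. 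So the inequality $T \le \|T\| P$ reduces to $T|_{\ran P} \le \|T\| \cdot I|_{\ran P}$, which is just the spectral bound $\|T|_{\ran P}\| \le \|T\|$ — true because $\ran P$ is a reducing subspace for $T$.

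I do not anticipate a serious obstacle here; the only point requiring a moment's care is the degenerate case $P = 0$ (handled by noting $T = 0$) and, more substantively, the passage from $\langle Th,h\rangle = 0$ to $Th = 0$, which uses positivity of $T$ via the factorization $\langle Th, h\rangle = \|T^{1/2}h\|^2$. Everything else is bookkeeping with the orthogonal decomposition induced by $P$ and the elementary fact $\|P\| \le 1$.
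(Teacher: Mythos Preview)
Your proof is correct and is precisely the elementary self-adjoint argument the paper alludes to; the paper itself gives no details beyond the phrase ``by elementary theory of self-adjoint operators,'' so your write-up simply fills in what the author left implicit. There is nothing to compare: your reduction via $\ker P\subseteq \ker T$ and the resulting block decomposition is the standard way to unpack that one-line proof.
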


\begin{proof}
By elementary theory of self-adjoint operators, we have the conclusion. 
\end{proof}

\begin{lem}\label{lem:3-2}
Let $(\varphi_1,\varphi_2,\varphi_3)$ be a bounded triplet in $\cP(\D^2;2,1)$. 
Then $\varphi_3$ belongs to $\varphi_1H^2+\varphi_2H^2$.
\end{lem}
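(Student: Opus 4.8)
The plan is to exploit the Douglas range inclusion theorem, exactly as in Example~\ref{ex:2-2}, but now applied to the operator inequality coming from membership in $\cP(\D^2;2,1)$. Since $(\varphi_1,\varphi_2,\varphi_3)$ is bounded, all three Toeplitz operators $T_{\varphi_1},T_{\varphi_2},T_{\varphi_3}$ extend to bounded operators on $H^2$, and the hypothesis
\[
T_{\varphi_1}T_{\varphi_1}^{\ast}+T_{\varphi_2}T_{\varphi_2}^{\ast}-T_{\varphi_3}T_{\varphi_3}^{\ast}\geq 0
\]
rearranges to $T_{\varphi_3}T_{\varphi_3}^{\ast}\leq T_{\varphi_1}T_{\varphi_1}^{\ast}+T_{\varphi_2}T_{\varphi_2}^{\ast}$. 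Writing $V=\begin{bmatrix}T_{\varphi_1} & T_{\varphi_2}\end{bmatrix}\colon H^2\oplus H^2\to H^2$, the right-hand side is exactly $VV^{\ast}$, so Douglas's theorem gives $\ran T_{\varphi_3}\subseteq \ran V = \ran T_{\varphi_1}+\ran T_{\varphi_2}$, and in fact a factorization $T_{\varphi_3}=V C$ for some bounded $C\colon H^2\to H^2\oplus H^2$.

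From the factorization I would extract the claim about $\varphi_3$ by evaluating on the constant function $1\in H^2$. Indeed $\varphi_3 = T_{\varphi_3}1 = V C 1$; writing $C1 = (g_1,g_2)\in H^2\oplus H^2$ we get $\varphi_3 = T_{\varphi_1}g_1 + T_{\varphi_2}g_2 = \varphi_1 g_1 + \varphi_2 g_2$, which is precisely the assertion $\varphi_3\in\varphi_1 H^2 + \varphi_2 H^2$. An alternative, perhaps cleaner, route avoiding the adjoint bookkeeping: test the kernel inequality against $k_\lambda$ to recover $|\varphi_3(\lambda)|^2 \le (|\varphi_1(\lambda)|^2+|\varphi_2(\lambda)|^2)/\text{(something)}$ — but in fact the operator form is more direct, so I would stick with Douglas.

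The step requiring genuine care is the bridge from the \emph{kernel-function} inequality in Definition~\ref{defn:2-1} to a \emph{bounded operator} inequality on all of $H^2$: a priori the inequality in $\cP(\D^2;2,1)$ holds only on the dense domain $\cD$, and the $T_{\varphi_j}$ are only densely defined. The boundedness hypothesis is exactly what rescues this: when $\varphi_j\in H^{\infty}$, $T_{\varphi_j}^{\ast}$ is bounded on $\cD$ hence extends to all of $H^2$, so $S:=T_{\varphi_1}T_{\varphi_1}^{\ast}+T_{\varphi_2}T_{\varphi_2}^{\ast}-T_{\varphi_3}T_{\varphi_3}^{\ast}$ is a bounded self-adjoint operator that is nonnegative on the dense set $\cD$, hence nonnegative everywhere. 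After that the application of Douglas is routine. One small point to verify en route is that $\ran V$, being the sum of the ranges of two bounded operators, need not be closed, but Douglas's theorem does not require closedness — it only yields the inclusion of ranges together with a bounded factorization — so no completion argument is needed.
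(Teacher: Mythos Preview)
Your proof is correct and follows essentially the same route as the paper: rearrange the defining inequality to $T_{\varphi_3}T_{\varphi_3}^{\ast}\le T_{\varphi_1}T_{\varphi_1}^{\ast}+T_{\varphi_2}T_{\varphi_2}^{\ast}$ and invoke Douglas's range inclusion theorem to obtain $\ran T_{\varphi_3}\subseteq \ran T_{\varphi_1}+\ran T_{\varphi_2}$, then apply this to $1\in H^2$. The only cosmetic difference is that the paper phrases the identification $\ran\sqrt{T_{\varphi_1}T_{\varphi_1}^{\ast}+T_{\varphi_2}T_{\varphi_2}^{\ast}}=\ran T_{\varphi_1}+\ran T_{\varphi_2}$ via Crimmins' theorem (Fillmore--Williams), whereas you package the same fact by writing the right-hand side as $VV^{\ast}$ for the row operator $V=[T_{\varphi_1}\ T_{\varphi_2}]$; these are equivalent formulations of the same result.
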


\begin{proof}
It follows from the operator inequality
\[
T_{\varphi_3}T_{\varphi_3}^{\ast}\leq 
T_{\varphi_1}T_{\varphi_1}^{\ast}+T_{\varphi_2}T_{\varphi_2}^{\ast}
\]
that  
$\ran T_{\varphi_3}$ is a subspace of 
\[
\ran \sqrt{T_{\varphi_1}T_{\varphi_1}^{\ast}
+T_{\varphi_2}T_{\varphi_2}^{\ast}}=\ran T_{\varphi_1}+\ran T_{\varphi_2}
\]
(see 
Theorem 2.2 attributed to Crimmins in Fillmore-Williams~\cite{FW}
or Theorem 3.6 in Ando~\cite{Ando}).
This concludes the proof. 
\end{proof}

\begin{lem}\label{lem:3-4}
Let $(\varphi_1,\varphi_2,\varphi_3)$ be a bounded triplet in $\cP(\D^2;2,1)$. 
If $\varphi_1(0,0)=\varphi_2(0,0)=0$, then 
\[
0\leq |\varphi_1(z)|^2+|\varphi_2(z)|^2-|\varphi_3(z)|^2
\leq \|T\|(|z_1|^2+|z_2|^2-|z_1z_2|^2)
\]
for any $z=(z_1,z_2)$ in $\D^2$, 
where we set 
\[
T=T_{\varphi_1}T_{\varphi_1}^{\ast}+T_{\varphi_2}T_{\varphi_2}^{\ast}
-T_{\varphi_3}T_{\varphi_3}^{\ast}.
\]
\end{lem}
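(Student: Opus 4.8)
The plan is to rewrite the scalar quantity $|\varphi_1(z)|^2+|\varphi_2(z)|^2-|\varphi_3(z)|^2$ as a value of the reproducing kernel associated with the operator $T$, and then to exploit the hypothesis $0\le T\le I$ together with the vanishing conditions $\varphi_1(0,0)=\varphi_2(0,0)=0$ to majorize $T$ by a smaller projection. The left-hand inequality is immediate: evaluating the kernel inequality from the definition of $\cP(\D^2;2,1)$ at $\lam=z$ gives $0\le |\varphi_1(z)|^2+|\varphi_2(z)|^2-|\varphi_3(z)|^2$, since this quantity equals $\la Tk_z,k_z\ra/\|k_z\|^2 \cdot \|k_z\|^2$ up to the positive factor $(1-|z_1|^2)^{-1}(1-|z_2|^2)^{-1}$. (By Lemma \ref{lem:3-2}, $\varphi_3\in\varphi_1 H^2+\varphi_2 H^2$, so $\varphi_3(0,0)=0$ as well.)

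For the right-hand inequality, the key observation is that because $\varphi_1,\varphi_2,\varphi_3$ all vanish at the origin, each of $\ran T_{\varphi_j}$ is contained in the submodule $\cM_0$ of $H^2$ consisting of functions vanishing at $(0,0)$ — equivalently in $z_1H^2+z_2H^2$. Hence $\ran T \subseteq \cM_0$, so $T \le \|T\| P_{\cM_0}$, where $P_{\cM_0}$ is the orthogonal projection of $H^2$ onto $z_1H^2+z_2H^2$; this is exactly the setting of Lemma \ref{lem:3-1}. Now by Example \ref{ex:2-1}, $P_{\cM_0}=T_{z_1}T_{z_1}^{\ast}+T_{z_2}T_{z_2}^{\ast}-T_{z_1z_2}T_{z_1z_2}^{\ast}$. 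Therefore
\[
T_{\varphi_1}T_{\varphi_1}^{\ast}+T_{\varphi_2}T_{\varphi_2}^{\ast}-T_{\varphi_3}T_{\varphi_3}^{\ast}
\le \|T\|\bigl(T_{z_1}T_{z_1}^{\ast}+T_{z_2}T_{z_2}^{\ast}-T_{z_1z_2}T_{z_1z_2}^{\ast}\bigr).
\]
Pairing both sides with $k_z$ against $k_z$, using $T_f^{\ast}k_z=\overline{f(z)}k_z$ and dividing by $\|k_z\|^2=(1-|z_1|^2)^{-1}(1-|z_2|^2)^{-1}>0$, the left side yields $|\varphi_1(z)|^2+|\varphi_2(z)|^2-|\varphi_3(z)|^2$ and the right side yields $\|T\|(|z_1|^2+|z_2|^2-|z_1z_2|^2)$, which is the claimed bound.

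The main obstacle — really the only substantive point — is justifying that $\ran T \subseteq z_1 H^2 + z_2 H^2$ and that this subspace is precisely $\cM_0 = \{f\in H^2: f(0,0)=0\}$, so that the operator-inequality-to-projection passage via Lemma \ref{lem:3-1} is legitimate. For the first part, since $\varphi_j(0,0)=0$ one has $\varphi_j H^2 \subseteq \cM_0$, hence $\ran T_{\varphi_j} = \overline{\varphi_j H^2}^{\,\cM_0\text{-closed}} \subseteq \cM_0$ because $\cM_0$ is closed; then $\ran T \subseteq \ran T_{\varphi_1} + \ran T_{\varphi_2} \subseteq \cM_0$ by the Crimmins-type range identity already invoked in Lemma \ref{lem:3-2}. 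For the second part, $z_1 H^2 + z_2 H^2$ is dense in $\cM_0$ and, being the range of the bounded operator $T_{z_1}T_{z_1}^{\ast}+T_{z_2}T_{z_2}^{\ast}+\ldots$ — or more simply, being itself closed as the sum of the ranges of the partial isometries $T_{z_1}, T_{z_2}$ whose final spaces are $z_1H^2$ and $H^2\ominus(\text{something})$ — coincides with $\cM_0$; this is exactly the content of the last assertion of Example \ref{ex:2-1}, which we may cite. Everything else is a routine reproducing-kernel evaluation.
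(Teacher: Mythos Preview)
Your proof is correct and follows essentially the same route as the paper: use Lemma~\ref{lem:3-2} to get $\varphi_3(0,0)=0$, observe that $\ran T\subseteq \cM_0=z_1H^2+z_2H^2$, deduce $0\le T\le \|T\|P_{\cM_0}$ (the paper does this via $\ran T^{1/2}\subseteq\overline{\ran T}$ and Douglas, then Lemma~\ref{lem:3-1}; your direct self-adjointness argument is a valid shortcut), identify $P_{\cM_0}$ via Example~\ref{ex:2-1}, and evaluate on $k_z$. One cosmetic slip: in your opening plan you write ``the hypothesis $0\le T\le I$,'' but membership in $\cP(\D^2;2,1)$ only gives $T\ge 0$---fortunately you never actually use the upper bound $I$, so the argument is unaffected.
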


\begin{proof}
Suppose that 
$\varphi_1$, $\varphi_2$ and $\varphi_3$ are bounded and 
$\varphi_1(0,0)=\varphi_2(0,0)=0$. 
Then, it follows from Lemma \ref{lem:3-2} that $\varphi_3(0,0)=0$. 
Hence $\varphi_1,\varphi_2$ and $\varphi_3$ belong to the submodule $\cM_0=z_1H^2+z_2H^2$. 
Then we have 
\[
\ran (T_{\varphi_1}T_{\varphi_1}^{\ast}+T_{\varphi_2}T_{\varphi_2}^{\ast}
-T_{\varphi_3}T_{\varphi_3}^{\ast})\subseteq \cM_0. 
\]
Further, by elementary spectral theory, we have
\[
\ran (T_{\varphi_1}T_{\varphi_1}^{\ast}+T_{\varphi_2}T_{\varphi_2}^{\ast}
-T_{\varphi_3}T_{\varphi_3}^{\ast})^{1/2}
\subseteq \overline{\ran} (T_{\varphi_1}T_{\varphi_1}^{\ast}+T_{\varphi_2}T_{\varphi_2}^{\ast}
-T_{\varphi_3}T_{\varphi_3}^{\ast})\subseteq \overline{\cM_0}=\cM_0. 
\]
Hence, it follows from the Douglas range inclusion theorem
that there exists a constant $c>0$ such that
\[
0\leq T_{\varphi_1}T_{\varphi_1}^{\ast}+T_{\varphi_2}T_{\varphi_2}^{\ast}
-T_{\varphi_3}T_{\varphi_3}^{\ast}\leq cP_{\cM_0},
\] 
where $P_{\cM_0}$ denotes the orthogonal projection of $H^2$ onto $\cM_0$. 
By Lemma \ref{lem:3-1}, we may take $c=\|T\|$. 
Hence we have 
\[0\leq 
T_{\varphi_1}T_{\varphi_1}^{\ast}+T_{\varphi_2}T_{\varphi_2}^{\ast}
-T_{\varphi_3}T_{\varphi_3}^{\ast}
\leq \|T\|P_{\cM_0}
=\|T\|(T_{z_1}T_{z_1}^{\ast}+T_{z_2}T_{z_2}^{\ast}-T_{z_1z_2}T_{z_1z_2}^{\ast})
\]
by Example \ref{ex:2-1}. 
In particular, 
\begin{align*}
(|\varphi_1(\lam)|^2+|\varphi_2(\lam)|^2-|\varphi_3(\lam)|^2)k_{\lam}(\lam)
&=\la (T_{\varphi_1}T_{\varphi_1}^{\ast}+T_{\varphi_2}T_{\varphi_2}^{\ast}
-T_{\varphi_3}T_{\varphi_3}^{\ast})k_{\lam},k_{\lam} \ra\\
&\leq \la \|T\|(T_{z_1}T_{z_1}^{\ast}
+T_{z_2}T_{z_2}^{\ast}-T_{z_1z_2}T_{z_1z_2}^{\ast})k_{\lam},k_{\lam} \ra\\
&=\|T\|(|\lam_1|^2+|\lam_2|^2-|\lam_1\lam_2|^2)k_{\lam}(\lam)
\end{align*}
for any $\lam=(\lam_1,\lam_2)$ in $\D^2$. 
This concludes the proof. 
\end{proof}

\begin{lem}\label{lem:3-5}
If $\psi=(\psi_1,\psi_2)$ be a holomorphic self-map on $\D^2$, then 
$(\psi_1,\psi_2,\psi_1\psi_2)$ belongs to $\cP(\D^2;2,1)$. 
\end{lem}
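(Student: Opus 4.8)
The plan is to reduce everything to the contractivity of $T_{\psi_1}$ and $T_{\psi_2}$. Since $\psi=(\psi_1,\psi_2)$ is a holomorphic self-map of $\D^2$, each coordinate function $\psi_j$ maps $\D^2$ into $\D$, so $\psi_1,\psi_2\in H^{\infty}$ with $\|\psi_j\|_{\infty}\le 1$; in particular $T_{\psi_1}$ and $T_{\psi_2}$ are genuine bounded contractions on $H^2$, and no domain subtlety arises in forming products and adjoints.

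Next I would use the factorization $T_{\psi_1\psi_2}=T_{\psi_1}T_{\psi_2}$, which gives
\[
T_{\psi_1\psi_2}T_{\psi_1\psi_2}^{\ast}=T_{\psi_1}\bigl(T_{\psi_2}T_{\psi_2}^{\ast}\bigr)T_{\psi_1}^{\ast}.
\]
Since $\|T_{\psi_2}\|\le 1$ we have $T_{\psi_2}T_{\psi_2}^{\ast}\le I$, and conjugating this inequality by $T_{\psi_1}$ yields $T_{\psi_1\psi_2}T_{\psi_1\psi_2}^{\ast}\le T_{\psi_1}T_{\psi_1}^{\ast}$. Adding the nonnegative operator $T_{\psi_2}T_{\psi_2}^{\ast}$,
\[
T_{\psi_1}T_{\psi_1}^{\ast}+T_{\psi_2}T_{\psi_2}^{\ast}-T_{\psi_1\psi_2}T_{\psi_1\psi_2}^{\ast}\ \ge\ T_{\psi_2}T_{\psi_2}^{\ast}\ \ge\ 0,
\]
which is exactly the defining inequality of $\cP(\D^2;2,1)$ for the triplet $(\psi_1,\psi_2,\psi_1\psi_2)$; one may equivalently rewrite the left-hand side as $T_{\psi_1}(I-T_{\psi_2}T_{\psi_2}^{\ast})T_{\psi_1}^{\ast}+T_{\psi_2}T_{\psi_2}^{\ast}$, a sum of two manifestly nonnegative operators.

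There is essentially no hard step here; the only point worth flagging is that, in contrast with Example \ref{ex:2-1} (where $\varphi_1$ and $\varphi_2$ depended on separate variables, so $T_{\varphi_1}$ and $T_{\varphi_2}$ were doubly commuting), here $\psi_1$ and $\psi_2$ are arbitrary functions of two variables and one must \emph{not} invoke double commutativity — but the computation above sidesteps this by keeping $T_{\psi_1}$ on the outside throughout. Note also that the upper bound $\le I$ need not hold in general, so $(\psi_1,\psi_2,\psi_1\psi_2)$ lies in $\cP(\D^2;2,1)$ but not necessarily in $\cS(\D^2;2,1)$; this is precisely why the rescaled triplet was needed in Example \ref{ex:2-2}.
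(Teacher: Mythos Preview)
Your proof is correct and essentially identical to the paper's: both use $\|\psi_j\|_\infty\le 1$ and the factorization $T_{\psi_1\psi_2}=T_{\psi_1}T_{\psi_2}$ to rewrite the operator as a sum of two nonnegative terms, with the paper choosing the symmetric decomposition $T_{\psi_1}T_{\psi_1}^{\ast}+T_{\psi_2}(I-T_{\psi_1}T_{\psi_1}^{\ast})T_{\psi_2}^{\ast}$ and you choosing $T_{\psi_1}(I-T_{\psi_2}T_{\psi_2}^{\ast})T_{\psi_1}^{\ast}+T_{\psi_2}T_{\psi_2}^{\ast}$. Your additional remarks on double commutativity and the failure of the upper bound are accurate and add useful context.
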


\begin{proof}
Since $\|\psi_j\|_{\infty}\leq 1$ for $j=1,2$, we have 
\[
T_{\psi_1}T_{\psi_1}^{\ast}
+T_{\psi_2}T_{\psi_2}^{\ast}-T_{\psi_1\psi_2}T_{\psi_1\psi_2}^{\ast}
=
T_{\psi_1}T_{\psi_1}^{\ast}
+T_{\psi_2}(I-T_{\psi_1}T_{\psi_1}^{\ast})T_{\psi_2}^{\ast}\geq 0
\]
Hence $(\psi_1,\psi_2,\psi_1\psi_2)$ belongs to $\cP(\D^2;2,1)$. 
\end{proof}

The following are indefinite Schwarz lemmas for the bidisk. 
\begin{thm}\label{thm:3-1}
If $\psi=(\psi_1,\psi_2)$ be a holomorphic self-map on $\D^2$ and $\psi(0,0)=(0,0)$, then 
\[
0\leq |\psi_1(z)|^2+|\psi_2(z)|^2-|\psi_1(z)\psi_2(z)|^2
\leq \|T\|(|z_1|^2+|z_2|^2-|z_1z_2|^2)
\]
for any $z=(z_1,z_2)$ in $\D^2$. 
\end{thm}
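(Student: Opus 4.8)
The plan is to obtain Theorem~\ref{thm:3-1} directly from Lemma~\ref{lem:3-4} and Lemma~\ref{lem:3-5}, with essentially no new work. First I would record that a holomorphic self-map $\psi=(\psi_1,\psi_2)$ of $\D^2$ has components $\psi_1,\psi_2\in H^{\infty}$ with $\|\psi_j\|_{\infty}\leq 1$, so that the product $\psi_1\psi_2$ is again in $H^{\infty}$ and the triplet $(\psi_1,\psi_2,\psi_1\psi_2)$ is bounded in the sense of Section~2. By Lemma~\ref{lem:3-5} this triplet belongs to $\cP(\D^2;2,1)$; in particular the associated operator
\[
T=T_{\psi_1}T_{\psi_1}^{\ast}+T_{\psi_2}T_{\psi_2}^{\ast}-T_{\psi_1\psi_2}T_{\psi_1\psi_2}^{\ast}
\]
is a bounded non-negative operator, and evaluating the corresponding positive kernel on the reproducing kernels $k_{\lam}$ (as in the final display of the proof of Lemma~\ref{lem:3-4}) already yields the left-hand inequality $0\leq |\psi_1(z)|^2+|\psi_2(z)|^2-|\psi_1(z)\psi_2(z)|^2$ for every $z\in\D^2$.

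For the right-hand inequality I would invoke the hypothesis $\psi(0,0)=(0,0)$, i.e.\ $\psi_1(0,0)=\psi_2(0,0)=0$. Thus the bounded triplet $(\varphi_1,\varphi_2,\varphi_3):=(\psi_1,\psi_2,\psi_1\psi_2)$ in $\cP(\D^2;2,1)$ satisfies the standing hypotheses of Lemma~\ref{lem:3-4}, whose conclusion is
\[
0\leq |\varphi_1(z)|^2+|\varphi_2(z)|^2-|\varphi_3(z)|^2\leq \|T\|(|z_1|^2+|z_2|^2-|z_1z_2|^2).
\]
Since $\varphi_3=\psi_1\psi_2$, this is verbatim the assertion of the theorem, with $T$ the operator displayed above. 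So the proof reduces to citing Lemma~\ref{lem:3-5} for membership in $\cP(\D^2;2,1)$ and Lemma~\ref{lem:3-4} for the two-sided estimate.

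I do not expect a genuine obstacle here, since the analytic content has been isolated in the preceding lemmas; the only points deserving a sentence of care are that all three entries of the triplet are honestly bounded (so that $T$ is a bounded operator and the range-inclusion and spectral arguments underlying Lemma~\ref{lem:3-4} apply), which is immediate from $\psi_j\in H^{\infty}$ and $\psi_1\psi_2\in H^{\infty}$, and that the normalization $\varphi_3(0,0)=0$ used in Lemma~\ref{lem:3-4} is not an extra assumption but follows automatically from $\varphi_1(0,0)=\varphi_2(0,0)=0$ via Lemma~\ref{lem:3-2}. With these remarks in place the argument is complete.
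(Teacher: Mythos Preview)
Your proposal is correct and follows exactly the paper's own proof, which reads in its entirety ``By Lemma~\ref{lem:3-4} and Lemma~\ref{lem:3-5}, we have the conclusion.'' Your additional remarks on boundedness of the triplet and on $\varphi_3(0,0)=0$ following via Lemma~\ref{lem:3-2} are accurate and merely make explicit what the paper leaves implicit.
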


\begin{proof}
By Lemma \ref{lem:3-4} and Lemma \ref{lem:3-5}, we have the conclusion.  
\end{proof}

\begin{prop}\label{lem:3-6}
Let $(\varphi_1,\varphi_2,\varphi_3)$ be a triplet in $\cS(\D^2;2,1)$. 
If $\varphi_1(0,0)=\varphi_2(0,0)=0$, then 
\[
0\leq |\varphi_1(z)|^2+|\varphi_2(z)|^2-|\varphi_3(z)|^2
\leq |z_1|^2+|z_2|^2-|z_1z_2|^2
\]
for any $z=(z_1,z_2)$ in $\D^2$. 
\end{prop}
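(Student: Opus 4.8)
The plan is to run the argument of Lemma~\ref{lem:3-4} essentially verbatim, while exploiting two features special to the class $\cS(\D^2;2,1)$: the upper bound $T\le I$ forces $\|T\|\le 1$, which is exactly what upgrades the conclusion of Lemma~\ref{lem:3-4} (a bound with constant $\|T\|$) to the sharp constant $1$; and the positivity $T\ge 0$ alone already supplies both the left-hand inequality and the vanishing of $\varphi_3$ at the origin, so that the boundedness hypothesis used in Lemma~\ref{lem:3-2}, and hence in Lemma~\ref{lem:3-4}, is not needed here.

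Concretely, I set $T=T_{\varphi_1}T_{\varphi_1}^{\ast}+T_{\varphi_2}T_{\varphi_2}^{\ast}-T_{\varphi_3}T_{\varphi_3}^{\ast}$ on $\cD$. First I would note that, since $0\le T\le I$ as a Hermitian form on the dense subspace $\cD$, $T$ extends to a bounded self-adjoint operator on $H^2$ with $\|T\|\le 1$, which I continue to denote by $T$. Using $T_{\varphi_j}^{\ast}k_{\lam}=\overline{\varphi_j(\lam)}k_{\lam}$, the inequality $\la Tk_{\lam},k_{\lam}\ra\ge 0$, divided by $k_{\lam}(\lam)>0$, gives the left-hand inequality $|\varphi_1(\lam)|^2+|\varphi_2(\lam)|^2-|\varphi_3(\lam)|^2\ge 0$ for all $\lam\in\D^2$; evaluating at $\lam=(0,0)$ together with $\varphi_1(0,0)=\varphi_2(0,0)=0$ yields $\varphi_3(0,0)=0$. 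This is the point at which the proof diverges from that of Lemma~\ref{lem:3-4}, since I obtain $\varphi_3(0,0)=0$ from positivity rather than from Lemma~\ref{lem:3-2}.

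Next I would observe that $\cM_0=z_1H^2+z_2H^2$ coincides with $\{f\in H^2:f(0,0)=0\}$, so $\varphi_1,\varphi_2,\varphi_3\in\cM_0$; since $\cM_0$ is a submodule, $\varphi_jk_{\lam}\in\cM_0$, hence $T\cD\subseteq\cM_0$ and $\ran T\subseteq\cM_0$. Elementary spectral theory then gives $\ran T^{1/2}\subseteq\overline{\ran}\,T\subseteq\cM_0$, so the Douglas range inclusion theorem produces $c>0$ with $0\le T\le cP_{\cM_0}$; by Lemma~\ref{lem:3-1} I may take $c=\|T\|$, and since $\|T\|\le 1$ and $P_{\cM_0}\ge 0$ this gives $0\le T\le P_{\cM_0}$. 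Rewriting $P_{\cM_0}=T_{z_1}T_{z_1}^{\ast}+T_{z_2}T_{z_2}^{\ast}-T_{z_1z_2}T_{z_1z_2}^{\ast}$ via Example~\ref{ex:2-1} and evaluating this operator inequality at $k_{\lam}$ (again dividing by $k_{\lam}(\lam)$) produces the right-hand inequality, completing the proof.

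The one thing that needs genuine care is that, unlike in Lemma~\ref{lem:3-4}, the $\varphi_j$ need not lie in $H^{\infty}$, so the individual operators $T_{\varphi_j}$ may be unbounded; I expect the main obstacle to be checking that the chain of operator-range manipulations remains legitimate — namely that $T$ itself is a bounded operator, that $\ran T^{1/2}\subseteq\overline{\ran}\,T$, and that the Douglas theorem applies. All of this goes through precisely because the constraint $T\le I$ makes $T$ bounded even when $T_{\varphi_1}$, $T_{\varphi_2}$, $T_{\varphi_3}$ are not.
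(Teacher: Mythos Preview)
Your proof is correct and takes a genuinely different route from the paper's. The paper handles the bounded case by direct appeal to Lemma~\ref{lem:3-4} (using $\|T\|\le 1$), and for unbounded triplets reduces to the bounded case by composing with the dilation $\psi_r(z)=(rz_1,rz_2)$: by Example~\ref{ex:2-1} and Corollary~\ref{cor:2-1} the dilated triplet $(\varphi_1\circ\psi_r,\varphi_2\circ\psi_r,\varphi_3\circ\psi_r)$ is bounded and still lies in $\cS(\D^2;2,1)$, so the bounded case applies and one lets $r\to 1$. You instead run the argument of Lemma~\ref{lem:3-4} directly, noting that the two places where boundedness was used can both be bypassed in the $\cS$ class: the appeal to Lemma~\ref{lem:3-2} is replaced by evaluating $T\ge 0$ at $k_{(0,0)}$ to force $\varphi_3(0,0)=0$, and the operator-range manipulations ($T^{1/2}$, Douglas' theorem, Lemma~\ref{lem:3-1}) remain valid because the two-sided bound $0\le T\le I$ on the dense domain $\cD$ makes $T$ extend to a bounded self-adjoint operator with $\|T\|\le 1$, even though the individual $T_{\varphi_j}$ may be unbounded. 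Your approach is more self-contained---it avoids Corollary~\ref{cor:2-1} and the limiting argument---and isolates exactly why membership in $\cS(\D^2;2,1)$, rather than merely $\cP(\D^2;2,1)$, removes the need for a boundedness hypothesis. The paper's approximation argument, on the other hand, showcases the stability of $\cS(\D^2;2,1)$ under composition, which is of independent interest.
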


\begin{proof}
If $(\varphi_1,\varphi_2,\varphi_3)$ is bounded, 
then we have the conclusion immediately by Lemma \ref{lem:3-4}.
Suppose that $(\varphi_1,\varphi_2,\varphi_3)$ is unbounded. 
Setting $\psi_r(z_1,z_2)=(rz_1,rz_2)$ for $0<r<1$, 
$(\varphi_1\circ \psi_r,\varphi_2\circ \psi_r,\varphi_3\circ \psi_r)$ belongs to $\cS(\D^2;2,1)$ 
by Corollary \ref{cor:2-1} and Example \ref{ex:2-1}.  Moreover, 
$\varphi_1\circ \psi_r$, $\varphi_2\circ \psi_r$ and $\varphi_3\circ \psi_r$ 
are bounded on $\D^2$, 
and $\varphi_1\circ \psi_r(0,0)=\varphi_2\circ \psi_r(0,0)=0$. 
Hence we have 
\begin{align*}
0&\leq |\varphi_1(rz)|^2+|\varphi_2(rz)|^2-|\varphi_3(rz)|^2\\
&=|\varphi_1\circ \psi_r(z)|^2+|\varphi_2\circ \psi_r(z)|^2-|\varphi_3\circ \psi_r(z)|^2\\
&\leq |z_1|^2+|z_2|^2-|z_1z_2|^2
\end{align*}
by Lemma \ref{lem:3-4}.
Letting $r$ tend to $1$, we have the conclusion for unbounded triplets. 
\end{proof}

The following is another indefinite Schwarz lemma for the bidisk. 
\begin{thm}\label{thm:3-2}
Let $\psi=(\psi_1,\psi_2)$ be a holomorphic self-map on $\D^2$. 
If $(\psi_1,\psi_2,\psi_1\psi_2)$ belongs to $\cQ(\D^2;2,1)$
and
$\psi(0,0)=(0,0)$, then 
$(\psi_1,\psi_2,\psi_1\psi_2)$ belongs to $\cS(\D^2;2,1)$ and 
\[
0\leq |\psi_1(z)|^2+|\psi_2(z)|^2-|\psi_1(z)\psi_2(z)|^2
\leq |z_1|^2+|z_2|^2-|z_1z_2|^2
\]
for any $z=(z_1,z_2)$ in $\D^2$. 
Moreover, if equality 
\[
|\psi_1(z)|^2+|\psi_2(z)|^2-|\psi_1(z)\psi_2(z)|^2
= |z_1|^2+|z_2|^2-|z_1z_2|^2
\]
holds on some open set, 
then 
$(\psi_1, \psi_2)=(e^{i\theta_1}z_1,e^{i\theta_2}z_2)$ or 
$\psi=(e^{i\theta_2}z_2,e^{i\theta_1}z_1)$. 
\end{thm}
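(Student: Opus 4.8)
\emph{Proof plan.} I would split the argument into the inclusion-plus-inequality part (routine) and the rigidity part (the real content). For the first part: by Lemma~\ref{lem:3-5} the triplet $(\psi_1,\psi_2,\psi_1\psi_2)$ lies in $\cP(\D^2;2,1)$, and by hypothesis it lies in $\cQ(\D^2;2,1)$; since $\cP(\D^2;2,1)\cap\cQ(\D^2;2,1)=\cS(\D^2;2,1)$, the triplet belongs to $\cS(\D^2;2,1)$. Because $\psi(0,0)=(0,0)$ gives $\psi_1(0,0)=\psi_2(0,0)=0$, Proposition~\ref{lem:3-6} applied with $(\varphi_1,\varphi_2,\varphi_3)=(\psi_1,\psi_2,\psi_1\psi_2)$ yields
\[
0\le|\psi_1(z)|^2+|\psi_2(z)|^2-|\psi_1(z)\psi_2(z)|^2\le|z_1|^2+|z_2|^2-|z_1z_2|^2\qquad(z\in\D^2).
\]

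For the rigidity statement, suppose the above holds with equality on a nonempty open set $U\subseteq\D^2$. Using the identity $a+b-ab=1-(1-a)(1-b)$, the equality on $U$ is equivalent to
\[
(1-|\psi_1(z)|^2)(1-|\psi_2(z)|^2)=(1-|z_1|^2)(1-|z_2|^2).
\]
Both sides are real-analytic on the connected domain $\D^2$ (here $|\psi_j(z)|<1$ throughout $\D^2$ since $\psi$ maps into $\D^2$), so the equality propagates from $U$ to all of $\D^2$. Now I would compare the homogeneous Taylor parts at the origin. Writing $\psi_j(z)=L_j(z)+O(|z|^2)$ with $L_j$ its linear part (possible since $\psi_j(0,0)=0$), one has $|\psi_j(z)|^2=|L_j(z)|^2+O(|z|^3)$, so the degree-two homogeneous part of $(1-|\psi_1|^2)(1-|\psi_2|^2)$ is $-|L_1(z)|^2-|L_2(z)|^2$, while that of $(1-|z_1|^2)(1-|z_2|^2)$ is $-|z_1|^2-|z_2|^2$. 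Comparing gives
\[
|L_1(z)|^2+|L_2(z)|^2=|z_1|^2+|z_2|^2\qquad(z\in\C^2),
\]
i.e. the derivative $\psi'(0,0)=(\partial\psi_j/\partial z_k(0,0))_{j,k=1,2}$ is unitary; in particular $|\det\psi'(0,0)|=1$.

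Finally, since $\psi$ is a holomorphic self-map of the bounded domain $\D^2$ fixing the origin with $|\det\psi'(0,0)|=1$, a classical rigidity theorem of H.~Cartan forces $\psi\in\aut(\D^2)$. Every element of $\aut(\D^2)$ has the form $z\mapsto(\tau_1(z_{\sigma(1)}),\tau_2(z_{\sigma(2)}))$ for a permutation $\sigma$ of $\{1,2\}$ and $\tau_1,\tau_2\in\aut(\D)$; the condition $\psi(0,0)=(0,0)$ forces each $\tau_i(0)=0$, hence each $\tau_i$ is a rotation, and therefore $(\psi_1,\psi_2)=(e^{i\theta_1}z_1,e^{i\theta_2}z_2)$ or $\psi=(e^{i\theta_2}z_2,e^{i\theta_1}z_1)$, as claimed.

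I expect the main obstacle to be the rigidity part: one must (i) correctly reduce the equality to the multiplicative form and, crucially, propagate it off $U$ by real-analyticity so that the lowest-order Taylor comparison at $0$ is legitimate, yielding $|\det\psi'(0,0)|=1$; and (ii) invoke—and, should the referee want it, supply a reference for—the Cartan rigidity statement that a holomorphic self-map of a bounded domain fixing a point, with unimodular Jacobian determinant there, is an automorphism. (An alternative to the Taylor comparison is to take $-\log$ of both sides and compare complex Hessians, extracting $|\det\psi'(z)|\equiv 1$ via a Cauchy--Binet identity; I find the Taylor route cleaner.) Everything else is routine bookkeeping.
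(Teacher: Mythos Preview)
Your argument is correct. The first half (membership in $\cS(\D^2;2,1)$ and the inequality) is essentially the paper's proof: the paper combines Lemma~\ref{lem:3-5} with the $\cQ$-hypothesis and then cites Theorem~\ref{thm:3-1} (equivalently your Proposition~\ref{lem:3-6}, since now $\|T\|\le 1$).

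For the rigidity clause you take a genuinely different route. The paper polarizes the equality on the open set $V$ to the sesqui-holomorphic identity
\[
\overline{\psi_1(\lam)}\psi_1(z)+\overline{\psi_2(\lam)}\psi_2(z)-\overline{\psi_1(\lam)\psi_2(\lam)}\psi_1(z)\psi_2(z)=\overline{\lam_1}z_1+\overline{\lam_2}z_2-\overline{\lam_1\lam_2}z_1z_2
\]
on $\D^2\times\D^2$, then applies $\partial_{z_j}\partial_{\bar\lam_j}$ and $\partial_{z_j}^2\partial_{\bar\lam_j}^2$ at the origin to obtain (\ref{eq:3-1}) and (\ref{eq:3-2}). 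From (\ref{eq:3-1}) together with $\|\psi_i\|_{H^2}\le\|\psi_i\|_\infty\le 1$ the paper squeezes $\|\psi_1\|^2+\|\psi_2\|^2=2$, forcing each $\psi_i$ to equal its linear part $c_{i1}z_1+c_{i2}z_2$ with $|c_{i1}|^2+|c_{i2}|^2=1$; then (\ref{eq:3-2}) gives $c_{1j}c_{2j}=0$, which pins down the monomial form. Your argument instead propagates by real-analyticity, reads off $\psi'(0,0)$ unitary from the degree-two homogeneous part, and then invokes the Cartan--Carath\'eodory rigidity theorem (holomorphic self-map of a bounded domain fixing a point with unimodular Jacobian determinant is an automorphism) together with the known description of $\aut(\D^2)$. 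Your route is shorter and more geometric, but imports an external several-complex-variables result; the paper's route is more computational but entirely self-contained and stays within the $H^2$ framework that the article is showcasing. Either way the conclusion is the same; if you submit your version, do supply a precise reference for the Cartan statement you use.
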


\begin{proof}
First, 
by Lemma \ref{lem:3-5}, $(\psi_1,\psi_2,\psi_1\psi_2)$ belongs to $\cS(\D^2;2,1)$.   
Hence, we have the inequality by Theorem \ref{thm:3-1}. 
Next, we suppose that 
\[
|\psi_1(z)|^2+|\psi_2(z)|^2-|\psi_1(z)\psi_2(z)|^2
= |z_1|^2+|z_2|^2-|z_1z_2|^2
\]
on an open set $V$. 
Then, by the polarization (see p. 28 in Agler-McCarthy~\cite{AM} or p. 2762 in Knese~\cite{Knese}), 
we have 
\[
\overline{\psi_1(\lam)}\psi_1(z)+\overline{\psi_2(\lam)}\psi_2(z)
-\overline{\psi_1(\lam)\psi_2(\lam)}\psi_1(z)\psi_2(z)
= \overline{\lam_1}z_1+\overline{\lam_2}z_2-\overline{\lam_1\lam_2}z_1z_2
\]
on $\overline{V}\times V$,
and this identity can be extended to $\D^2\times \D^2$. 
Then, for $j=1,2$, we have
\[
\left| \dfrac{\p \psi_1}{\p z_j} \right|^2
+\left| \dfrac{\p \psi_2}{\p z_j} \right|^2
-\left| \dfrac{\p \psi_1\psi_2}{\p z_j} \right|^2
=\left| \dfrac{\p z_1}{\p z_j} \right|^2
+\left| \dfrac{\p z_2}{\p z_j} \right|^2
-\left| \dfrac{\p z_1z_2}{\p z_j} \right|^2.
\]
Hence we have
\begin{equation}\label{eq:3-1}
\left| \dfrac{\p \psi_1}{\p z_j}(0,0) \right|^2
+\left| \dfrac{\p \psi_2}{\p z_j}(0,0) \right|^2
=1.
\end{equation}
Similarly, we have
\begin{equation}\label{eq:3-2}
\left| \dfrac{\p^2 \psi_1}{\p z_j^2}(0,0) \right|^2
+\left| \dfrac{\p^2 \psi_2}{\p z_j^2}(0,0) \right|^2
-4\left| \dfrac{\p \psi_1}{\p z_j}(0,0)\dfrac{\p \psi_2}{\p z_j}(0,0)  \right|^2=0.
\end{equation}
It follows from (\ref{eq:3-1}) that 
\[
\|\psi_1\|^2+\|\psi_2\|^2\geq 
\left| \dfrac{\p \psi_1}{\p z_1}(0,0) \right|^2
+\left| \dfrac{\p \psi_1}{\p z_2}(0,0) \right|^2
+\left| \dfrac{\p \psi_2}{\p z_1}(0,0) \right|^2
+\left| \dfrac{\p \psi_2}{\p z_2}(0,0) \right|^2
=2.
\]
Hence, $\|\psi_1\|=1$ and $\|\psi_2\|=1$ and
\[
\psi_i=c_{i1}z_1+c_{i2}z_2\quad (|c_{i1}|^2+|c_{i2}|^2=1).
\] 
Further, by (\ref{eq:3-2}), we have
\[
\dfrac{\p \psi_1}{\p z_j}(0,0)\dfrac{\p \psi_2}{\p z_j}(0,0)=0,
\]
that is, $c_{1j}c_{2j}=0$. 
This concludes the proof. 
\end{proof}

\begin{cor}\label{cor:3-1}
Let $f$ be a holomorphic function on $\D^2$. 
If $\| f\|_{\infty}\leq 1$
and
$f(0,0)=0$, then 
\[
0\leq |f(z)|^2
\leq |z_1|^2+|z_2|^2-|z_1z_2|^2
\]
for any $z=(z_1,z_2)$ in $\D^2$. 
\end{cor}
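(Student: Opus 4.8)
The plan is to deduce Corollary \ref{cor:3-1} directly from Theorem \ref{thm:3-1} by choosing an appropriate holomorphic self-map of $\D^2$ built from $f$. Since $\|f\|_\infty \leq 1$ and $f(0,0)=0$, the obvious candidate is the self-map $\psi=(\psi_1,\psi_2)$ with $\psi_1=f$ and $\psi_2=0$: indeed each component maps $\D^2$ into $\overline{\D}$, and at the origin $\psi(0,0)=(f(0,0),0)=(0,0)$, so the hypotheses of Theorem \ref{thm:3-1} are satisfied. However, one should be careful that Theorem \ref{thm:3-1} as stated presumes $\psi$ is a self-map; with $\psi_2\equiv 0$ the image of $\psi$ lies in $\D\times\{0\}\subseteq\D^2$, which is still $\D^2$, so this is legitimate (alternatively, to stay safely inside $\D^2$ one may use $\psi=(f,0)$ after noting the argument only uses $\|\psi_j\|_\infty\leq 1$, which is the content of Lemma \ref{lem:3-5}).

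With this choice, Theorem \ref{thm:3-1} gives
\[
0\leq |\psi_1(z)|^2+|\psi_2(z)|^2-|\psi_1(z)\psi_2(z)|^2
\leq \|T\|(|z_1|^2+|z_2|^2-|z_1z_2|^2),
\]
where $T=T_{\psi_1}T_{\psi_1}^{\ast}+T_{\psi_2}T_{\psi_2}^{\ast}-T_{\psi_1\psi_2}T_{\psi_1\psi_2}^{\ast}$. Substituting $\psi_1=f$, $\psi_2=0$ collapses the left-hand side to $|f(z)|^2$ and collapses $T$ to $T_f T_f^{\ast}$. So the remaining task is to show $\|T_f T_f^{\ast}\|\leq 1$, which is immediate: $\|T_f T_f^{\ast}\|=\|T_f^{\ast}\|^2=\|T_f\|^2\leq\|f\|_\infty^2\leq 1$ since $f\in H^\infty$ with $\|f\|_\infty\leq 1$. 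This yields exactly $0\leq |f(z)|^2\leq |z_1|^2+|z_2|^2-|z_1z_2|^2$ for all $z\in\D^2$.

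The main (and only) obstacle is a bookkeeping one: verifying that the degenerate choice $\psi_2\equiv 0$ is actually admissible in Theorem \ref{thm:3-1}, and that the norm bound $\|T\|\le 1$ holds in this degenerate case. Both are routine — the first because a constant function is trivially a bounded holomorphic function and Lemma \ref{lem:3-5} only needs $\|\psi_j\|_\infty\le 1$; the second because of the elementary identity $\|T_f T_f^*\| = \|T_f\|^2$ together with the standard fact that the multiplication operator norm is dominated by the sup-norm of the symbol. So really the proof is a one-line specialization of Theorem \ref{thm:3-1}, and the write-up will consist of making that specialization explicit and recording the estimate $\|T_f\|\le\|f\|_\infty\le 1$.
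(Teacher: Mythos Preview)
Your proposal is correct and follows essentially the paper's approach: both set $\psi=(f,0)$ and the substance in either case is the single operator inequality $T_fT_f^{\ast}\leq I$. The only difference is cosmetic bookkeeping --- the paper invokes Theorem~\ref{thm:3-2} by noting that $(f,0,0)\in\cQ(\D^2;2,1)$ (which is precisely $I-T_fT_f^{\ast}\geq 0$), whereas you invoke Theorem~\ref{thm:3-1} and bound the factor $\|T\|=\|T_fT_f^{\ast}\|\leq 1$ directly.
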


\begin{proof}
Set $\psi=(\psi_1,\psi_2)=(f,0)$. 
Then $\psi$ is a holomorphic self-map, $\psi(0,0)=(0,0)$ 
and $(\psi_1,\psi_2,\psi_1\psi_2)=(f,0,0)$ belongs to $\cQ(\D^2;2,1)$.  
\end{proof}

In the next example, we shall see that 
Theorem \ref{thm:3-2} gives a criterion for membership in $\cS(\D^2;2,1)$. 
\begin{ex}\rm 
For $z=(z_1,z_2)$, 
we set
\[
\psi_1(z)=\dfrac{z_1+z_2}{2},\quad \psi_2(z)=\dfrac{z_1-z_2}{2}
\]
and $\psi=(\psi_1,\psi_2)$. Then $\psi$ is a holomorphic self-map on $\D^2$
and $\psi(0,0)=(0,0)$. 
However, $(\psi_1,\psi_2,\psi_1\psi_2)$ does not belong to $\cS(\D^2;2,1)$. 
Indeed, 
\begin{align*}
&\quad\ |z_1|^2+|z_2|^2-|z_1z_2|^2-
(|\psi_1(z)|^2+|\psi_2(z)|^2-|\psi_1(z)\psi_2(z)|^2)\\
&=|z_1|^2+|z_2|^2-|z_1z_2|^2-( |\dfrac{z_1+z_2}{2}|^2
+|\dfrac{z_1-z_2}{2}|^2+|\dfrac{z_1+z_2}{2}\cdot \dfrac{z_1-z_2}{2}|^2   )
\\
&=|z_1|^2+|z_2|^2-|z_1z_2|^2-\dfrac{1}{2}(|z_1|^2+|z_2|^2)-\dfrac{1}{16}|z_1^2-z_2^2|^2 \\
&\to -\dfrac{1}{16}|e^{2i\theta_1}-e^{2i\theta_2}|^2\quad (|z_1|,|z_2|\to 1).
\end{align*}
It follows from this calculation and Theorem \ref{thm:3-2} that 
$(\psi_1,\psi_2,\psi_1\psi_2)$ does not belong to $\cS(\D^2;2,1)$. 
\end{ex}

\begin{rem}\rm
Let $\psi=(\psi_1,\psi_2)$ be a holomorphic self-map on $\D^2$. 
If $(\psi_1,\psi_2,\psi_1\psi_2)$ belongs to $\cS(\D^2;2,1)$, 
then, the proof of Theorem 1 in Jury~\cite{Jury1} can be applied and 
we have that the composition operator $C_{\psi}:H^2\to H^2$ is bounded. 
As its corollary, the inequality in Theorem \ref{thm:3-2} is obtained.  
\end{rem}

\begin{rem}[Kre\u{\i}n space geometry and $\D^2$]\rm 
We introduce a Kre\u{\i}n space structure into $\C^3$ as follows:
\[
\la z,w \ra_{\cK}
=z_1\overline{w_1}+z_2\overline{w_2}-z_3\overline{w_3}\quad
(z=(z_1,z_2,z_3),w=(w_1,w_2,w_3)\in \C^3).
\] 
Let $\cK$ denote this Kre\u{\i}n space, 
and let $\Phi$ be the map defined as follows:
\[
\Phi:\D^2 \to \cK,\quad (z_1,z_2) \mapsto (z_1,z_2,z_1z_2).
\]
Moreover, we set 
\begin{align*}
\Omega
&=\{(z_1,z_2)\in \C^2:0\leq |z_1|^2+|z_2|^2-|z_1z_2|^2<1\}\\
&=\{ z\in \cK: 0\leq \la z,z \ra_{\cK}<1 \}.
\end{align*}
Then, since 
\[
|z_1|^2+|z_2|^2-|z_1z_2|^2=
1-(1-|z_1|^2)(1-|z_2|^2),
\]
$\D^2$ is the bounded connected component of $\Omega$, 
and $\partial \D^2$, the topological boundary of $\D^2$, is equal to 
the subset 
\[
\{(z_1,z_2)\in \C^2:|z_1|^2+|z_2|^2-|z_1z_2|^2=1\}=
\{ z\in \cK: \la z,z \ra_{\cK}=1 \}.
\]
\end{rem}

\section{Indefinite Schwarz-Pick inequality}

Let $q_1=q_1(z_1)$ and $q_2=q_2(z_2)$ be inner functions of single variable. 
Then 
\[
\cM=q_1H^2+q_2H^2
\] 
is a submodule of $H^2$. 
This submodule was introduced by Izuchi-Nakazi-S~\cite{INS}, 
and is said to be of INS-type. In this section, 
we shall give an application of spectral theory on submodules of INS type
\footnote{
I remember that Izuchi showed me a fax from Nakazi. 
In which, Nakazi posed a problem and wrote ``\textit{it will be fruitful}". 
After their preliminary work, 
the problem was solved, 
and now it is known as the main theorem of \cite{INS}. 
}. 
In the general theory of Hilbert modules in $H^2$, 
the core (defect) operator of a submodule $\cM$ in $H^2$ is defined as follows:
\[
\Delta_{\cM}
=P_{\cM}-T_{z_1}P_{\cM}T_{z_1}^{\ast}-T_{z_2}P_{\cM}T_{z_2}^{\ast}
+T_{z_1z_2}P_{\cM}T_{z_1z_2}^{\ast},
\]
where $P_{\cM}$ denotes the orthogonal projection of $H^2$ onto $\cM$. 
For a submodule of INS-type, 
it is known that
\[
\Delta_{\cM}
=q_1\otimes q_1+q_2 \otimes q_2- (q_1q_2)\otimes (q_1q_2),
\]
where $\otimes$ denotes the Schatten form. 
Core operators were introduced and studied 
by Guo-Yang~\cite{GY} and Yang~\cite{Yang} in detail, 
and which are devices connecting reproducing kernels and submodules. 
In particular, the following formula is useful: 
\begin{equation}\label{eq:4-1}
k_{\lam}(\Delta_{\cM}k_{\lam})=P_{\cM}k_{\lam}.
\end{equation}
By application of those facts, 
Lemma \ref{lem:3-4} is generalized as follows.

\begin{lem}\label{lem:4-1}
Let $\cM$ be a submodule of finite rank whose core operator 
has a representation 
\[
\Delta_{\cM}=\sum_{j=1}^{n+1}\eta_j\otimes \eta_j-\sum_{j=n+2}^{2n+1}\eta_j\otimes \eta_j.
\]
If
$(\varphi_1,\varphi_2,\varphi_3)$ is a bounded triplet in $\cP(\D^2;2,1)$,  
and $\varphi_1$ and $\varphi_2$ belong to $\cM$, 
then  
\[
0\leq |\varphi_1(z)|^2+|\varphi_2(z)|^2-|\varphi_3(z)|^2
\leq \|T\|\left(
\sum_{j=1}^{n+1}|\eta_j(z)|^2-\sum_{j=n+2}^{2n+1}|\eta_j(z)|^2\right)
\]
for any $z$ in $\D^2$, 
where we set 
\[
T=T_{\varphi_1}T_{\varphi_1}^{\ast}+T_{\varphi_2}T_{\varphi_2}^{\ast}
-T_{\varphi_3}T_{\varphi_3}^{\ast}. 
\]
In particular, 
if $\cM=q_1H^2+q_2H^2$ for 
inner functions $q_1=q_1(z_1)$ and $q_2=q_2(z_2)$ of single variable,  
then
\[
0\leq |\varphi_1(z)|^2+|\varphi_2(z)|^2-|\varphi_3(z)|^2
\leq \|T\|(|q_1(z_1)|^2+|q_2(z_2)|^2-|q_1(z_1)q_2(z_2)|^2)
\]
for any $z=(z_1,z_2)$ in $\D^2$.
\end{lem}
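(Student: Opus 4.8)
The plan is to mimic the proof of Lemma~\ref{lem:3-4}, replacing the specific submodule $\cM_0 = z_1 H^2 + z_2 H^2$ by a general finite-rank submodule $\cM$. First I would observe that since $\varphi_1$ and $\varphi_2$ belong to $\cM$ and $\cM$ is a submodule (hence invariant under $H^\infty$), the ranges $\ran T_{\varphi_1}$ and $\ran T_{\varphi_2}$ are contained in $\cM$; by Lemma~\ref{lem:3-2} (or the Crimmins/Ando argument used there) $\varphi_3 \in \varphi_1 H^2 + \varphi_2 H^2 \subseteq \cM$ as well, so $\ran T_{\varphi_3} \subseteq \cM$. Consequently $\ran T \subseteq \cM$, and since $\cM$ is closed, the elementary spectral-theory step gives $\ran T^{1/2} \subseteq \overline{\ran\, T} \subseteq \cM$. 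Because $\cM$ is closed, the Douglas range inclusion theorem yields a constant $c>0$ with $0 \le T \le c P_{\cM}$, and Lemma~\ref{lem:3-1} lets us take $c = \|T\|$, so $0 \le T \le \|T\| P_{\cM}$.

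Next I would bring in the core operator. The hypothesis is that $\Delta_\cM = \sum_{j=1}^{n+1}\eta_j\otimes\eta_j - \sum_{j=n+2}^{2n+1}\eta_j\otimes\eta_j$. Evaluating the inequality $0 \le T \le \|T\| P_{\cM}$ in the state given by the reproducing kernel $k_\lam$ at a point $\lam \in \D^2$, and using the fundamental formula \eqref{eq:4-1}, namely $k_\lam(\Delta_\cM k_\lam) = P_\cM k_\lam$, together with the identity $\la P_\cM k_\lam, k_\lam\ra = (\Delta_\cM k_\lam)(\lam)\, k_\lam(\lam)$. Here I would need the computation
\[
\la (\Delta_\cM k_\lam)(\lam)\, k_\lam, k_\lam\ra
= k_\lam(\lam)\sum_{j=1}^{n+1}|\eta_j(\lam)|^2 - k_\lam(\lam)\sum_{j=n+2}^{2n+1}|\eta_j(\lam)|^2,
\]
which follows because $(\eta_j\otimes\eta_j)k_\lam = \la k_\lam,\eta_j\ra \eta_j = \overline{\eta_j(\lam)}\,\eta_j$, so its value at $\lam$ is $|\eta_j(\lam)|^2$. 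On the other side, $\la T k_\lam, k_\lam\ra = (|\varphi_1(\lam)|^2+|\varphi_2(\lam)|^2-|\varphi_3(\lam)|^2)\,k_\lam(\lam)$ exactly as in Lemma~\ref{lem:3-4}. Dividing through by $k_\lam(\lam) > 0$ gives the desired two-sided inequality at every $\lam \in \D^2$.

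For the ``in particular'' clause I would simply invoke the known formula for the core operator of an INS-type submodule recalled just before the lemma, $\Delta_\cM = q_1\otimes q_1 + q_2\otimes q_2 - (q_1q_2)\otimes(q_1q_2)$, which is the case $n=1$ with $\eta_1 = q_1$, $\eta_2 = q_2$, $\eta_3 = q_1 q_2$; substituting into the general bound and noting $|q_1(z_1)q_2(z_2)|^2 = |q_1(z_1)|^2|q_2(z_2)|^2$ gives the stated estimate.

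I expect the main obstacle to be the first paragraph: verifying carefully that $\ran T \subseteq \cM$ requires that $\varphi_3$ itself lie in $\cM$, which in turn uses the Crimmins factorization argument from Lemma~\ref{lem:3-2} applied inside $\cM$ (one must check $\ran\sqrt{T_{\varphi_1}T_{\varphi_1}^\ast + T_{\varphi_2}T_{\varphi_2}^\ast} = \ran T_{\varphi_1} + \ran T_{\varphi_2} \subseteq \cM$), and then the passage from $\ran T \subseteq \cM$ to $\ran T^{1/2}\subseteq \cM$ needs the closedness of $\cM$ together with elementary spectral theory. Once the operator inequality $0 \le T \le \|T\| P_\cM$ is in hand, the rest is a routine evaluation against reproducing kernels using \eqref{eq:4-1}, entirely parallel to Lemma~\ref{lem:3-4}.
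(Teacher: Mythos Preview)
Your proposal is correct and follows essentially the same approach as the paper: the paper simply says ``By the same argument as the first half of the proof of Lemma~\ref{lem:3-4}'' to obtain $0\le T\le \|T\|P_{\cM}$, then evaluates both sides on $k_\lam$ using the identity \eqref{eq:4-1} and the rank-one expansion of $\Delta_{\cM}$, exactly as you outline. Your write-up merely makes explicit the details (that $\varphi_3\in\cM$ via Lemma~\ref{lem:3-2}, the passage to $\ran T^{1/2}$, and the kernel computation $(\eta_j\otimes\eta_j)k_\lam=\overline{\eta_j(\lam)}\,\eta_j$), all of which are implicit in the paper's proof.
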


\begin{proof}
By the same argument as the first half of the proof of Lemma \ref{lem:3-4}, 
we have 
\[0\leq 
T_{\varphi_1}T_{\varphi_1}^{\ast}+T_{\varphi_2}T_{\varphi_2}^{\ast}
-T_{\varphi_3}T_{\varphi_3}^{\ast}
\leq \|T\|P_{\cM}.
\]
Then, for any $\lam=(\lam_1,\lam_2)$ in $\D^2$, we have 
\begin{align*}
(|\varphi_1(\lam)|^2+|\varphi_2(\lam)|^2-|\varphi_3(\lam)|^2)k_{\lam}(\lam)
&=\la  (T_{\varphi_1}T_{\varphi_1}^{\ast}+T_{\varphi_2}T_{\varphi_2}^{\ast}
-T_{\varphi_3}T_{\varphi_3}^{\ast})k_{\lam},k_{\lam} \ra\\
&\leq \la \|T\|P_{\cM}k_{\lam},k_{\lam} \ra\\
&=\|T\|\la k_{\lam}(\Delta_{\cM}k_{\lam}),k_{\lam} \ra\\
&=\|T\|\left\la k_{\lam}\left(\sum_{j=1}^{n+1}\eta_j\otimes \eta_j
-\sum_{j=n+2}^{2n+1}\eta_j\otimes \eta_j\right)k_{\lam},k_{\lam} \right\ra\\
&=\|T\|\left(\sum_{j=1}^{n+1}|\eta_j(z)|^2-\sum_{j=n+2}^{2n+1}|\eta_j(z)|^2\right)k_{\lam}(\lam)
\end{align*}
by (\ref{eq:4-1}).
This concludes the proof.
\end{proof}

For $z=(z_1,z_2)$ and $w=(w_1,w_2)$ in $\D^2$, we set
\[
b_{w_j}(z_j)=
\frac{z_j-w_j}{1-\overline{w_j}z_j}\quad (j=1,2).
\]
Then, we note that
\[
|b_{w_1}(z_1)|^2+|b_{w_2}(z_2)|^2
-|b_{w_1}(z_1)b_{w_2}(z_2)
|^2
=1- (1-|b_{w_1}(z_1)|^2)
(1-|b_{w_2}(z_2)|^2)>0.
\]
Hence
\[
d(z,w)=\sqrt{|b_{w_1}(z_1)|^2+|b_{w_2}(z_2)|^2
-|b_{w_1}(z_1)b_{w_2}(z_2)
|^2}
\]
is defined. 

\begin{thm}\label{thm:5-1}
Let $\psi=(\psi_1,\psi_2)$ be a holomorphic self-map on $\D^2$. 
Then, 
\[
0\leq d (\psi(z),\psi(w))\leq \sqrt{2}d (z,w)<\sqrt{2}
\]
for any $z$ and $w$ in $\D^2$.  
\end{thm}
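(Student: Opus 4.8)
The plan is to reduce Theorem \ref{thm:5-1} to the Schwarz-type estimate already established in the bounded/unbounded triplet lemmas, by composing with disk automorphisms so that the evaluation point is sent to the origin. Fix $z,w\in\D^2$ and set $u=\psi(z)$, $v=\psi(w)$. Let $\phi_{w}=(b_{w_1},b_{w_2})$ and $\phi_{v}=(b_{v_1},b_{v_2})$ be the coordinatewise M\"obius automorphisms of $\D^2$, so that $\phi_w(w)=(0,0)$ and $\phi_v(v)=(0,0)$. Then $\tilde\psi:=\phi_v\circ\psi\circ\phi_w^{-1}$ is again a holomorphic self-map of $\D^2$, and it fixes the origin: $\tilde\psi(0,0)=(0,0)$. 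Writing $\tilde\psi=(\tilde\psi_1,\tilde\psi_2)$, the quantity we must estimate is, after the substitution $z\mapsto\phi_w(z)$,
\[
d(\psi(z),\psi(w))^2=|\tilde\psi_1(\zeta)|^2+|\tilde\psi_2(\zeta)|^2-|\tilde\psi_1(\zeta)\tilde\psi_2(\zeta)|^2,
\qquad \zeta=\phi_w(z),
\]
since $b_{v_j}(\psi_j(z))=\tilde\psi_j(\phi_w(z))$ by definition of $\tilde\psi$; and the right-hand bound $2\,d(z,w)^2$ becomes $2(|\zeta_1|^2+|\zeta_2|^2-|\zeta_1\zeta_2|^2)$.

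Next I would apply the known Schwarz lemma to $\tilde\psi$. By Lemma \ref{lem:3-5}, $(\tilde\psi_1,\tilde\psi_2,\tilde\psi_1\tilde\psi_2)$ lies in $\cP(\D^2;2,1)$, and by Example \ref{ex:2-2} the scaled triplet $(\tilde\psi_1/\sqrt2,\tilde\psi_2/\sqrt2,\tilde\psi_1\tilde\psi_2/2)$ lies in $\cS(\D^2;2,1)$; since $\tilde\psi(0,0)=(0,0)$ this scaled triplet also vanishes at the origin in its first two entries. Applying Proposition \ref{lem:3-6} to it yields
\[
0\le \tfrac12\bigl(|\tilde\psi_1(\zeta)|^2+|\tilde\psi_2(\zeta)|^2\bigr)-\tfrac14|\tilde\psi_1(\zeta)\tilde\psi_2(\zeta)|^2
\le |\zeta_1|^2+|\zeta_2|^2-|\zeta_1\zeta_2|^2,
\]
hence $|\tilde\psi_1|^2+|\tilde\psi_2|^2-|\tilde\psi_1\tilde\psi_2|^2 \le |\tilde\psi_1|^2+|\tilde\psi_2|^2-\tfrac12|\tilde\psi_1\tilde\psi_2|^2 \le 2(|\zeta_1|^2+|\zeta_2|^2-|\zeta_1\zeta_2|^2)$. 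The lower bound $d(\psi(z),\psi(w))\ge 0$ is automatic from the displayed identity $|a|^2+|b|^2-|ab|^2=1-(1-|a|^2)(1-|b|^2)$ with $|a|,|b|<1$, and the strict bound $d(z,w)<1$ follows from the same identity since $(1-|b_{w_1}(z_1)|^2)(1-|b_{w_2}(z_2)|^2)>0$. Combining gives $0\le d(\psi(z),\psi(w))\le \sqrt2\,d(z,w)<\sqrt2$.

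The main point requiring care is the composition bookkeeping: one must check that $\phi_v\circ\psi\circ\phi_w^{-1}$ is genuinely a self-map (clear, as a composition of a self-map with automorphisms), that it extends holomorphically and fixes the origin (immediate from $\phi_w^{-1}(0,0)=w$, $\psi(w)=v$, $\phi_v(v)=(0,0)$), and — the only real subtlety — that the identity $b_{v_j}\circ\psi_j\circ\phi_w^{-1}=\tilde\psi_j$ is exactly what converts $d(\psi(z),\psi(w))$ into the origin-fixing expression in $\zeta=\phi_w(z)$. Here one uses that $\phi_w$ is a bijection of $\D^2$, so running $\zeta$ over $\D^2$ is the same as running $z$ over $\D^2$; there is no loss. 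An alternative, slightly slicker route avoiding compositions entirely would be to invoke Theorem \ref{thm:3-1} for the fixed-origin case and then \emph{observe} that the whole Schwarz-Pick statement is the automorphism-invariant form of it — but writing out the invariance is precisely the composition argument above, so I would simply carry out that argument directly.
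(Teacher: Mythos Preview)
Your argument is correct, but it follows a different route from the paper's. You reduce to the origin by conjugating with the coordinatewise M\"obius automorphisms $\phi_w,\phi_v$ and then invoke the origin-case estimates (Proposition~\ref{lem:3-6} applied to the scaled triplet from Example~\ref{ex:2-2}, or equivalently Theorem~\ref{thm:3-1} together with $\|T\|\le 2$). The paper instead sets $\varphi_j=b_{\psi_j(w)}\circ\psi_j$, notes that $\varphi_1,\varphi_2$ vanish at $w$ and hence lie in the INS-type submodule $\cM=b_{w_1}H^2+b_{w_2}H^2$, and applies Lemma~\ref{lem:4-1} directly to that submodule; the Blaschke factors $b_{w_j}$ then appear on the right-hand side via the core operator $\Delta_{\cM}$ and formula~(\ref{eq:4-1}), with no automorphism reduction needed. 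Your approach is the classical ``Schwarz implies Schwarz--Pick'' move and is arguably more elementary (it never touches Section~4's submodule machinery), whereas the paper's approach is chosen precisely to exhibit Lemma~\ref{lem:4-1} and the Hilbert-module spectral theory as the engine behind the inequality. Either way the $\|T\|\le 2$ bound (implicit in Example~\ref{ex:2-2}) is what produces the constant $\sqrt{2}$.
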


\begin{proof}
For $z=(z_1,z_2)$ and $w=(w_1,w_2)$ in $\D^2$, we set
\[
\varphi_j(z)=b_{\psi_j(w)}(\psi_j(z))=\frac{\psi_j(z)-\psi_j(w)}{1-\overline{\psi_j(w)}\psi_j(z)}.
\]
Then, 
$(\varphi_1,\varphi_2)$ is a holomorphic self-map on $\D^2$, and   
$(\varphi_1,\varphi_2,\varphi_1\varphi_2)$ belongs to $\cP(\D^2;2,1)$ 
by Lemma \ref{lem:3-5}. 
It follows from $\varphi_1(w)=\varphi_2(w)=0$ 
that $\varphi_1$ and $\varphi_2$ belong to the submodule 
$b_{w_1}H^2+b_{w_2}H^2$. 
Hence, by Lemma \ref{lem:4-1}, 
we have 
\begin{align*}
0
&\leq |\varphi_1(z)|^2+|\varphi_2(z)|^2-|\varphi_1(z)\varphi_2(z)|^2\\
&\leq \|T\|(|b_{w_1}(z_1)|^2+|b_{w_2}(z_2)|^2-|b_{w_1}(z_1)b_{w_2}(z_2)|^2)\\
&\leq 2(|b_{w_1}(z_1)|^2+|b_{w_2}(z_2)|^2-|b_{w_1}(z_1)b_{w_2}(z_2)|^2)\\
&< 2.
\end{align*}
This concludes the proof.
\end{proof}

\begin{thm}\label{thm:5-2}
Let $\psi=(\psi_1,\psi_2)$ be a holomorphic self-map on $\D^2$. 
If $(\psi_1,\psi_2,\psi_1\psi_2)$ belongs to $\cQ(\D^2;2,1)$, then
\[
0\leq d (\psi(z),\psi(w))\leq d (z,w)<1
\]
for any $z$ and $w$ in $\D^2$. 
Moreover, 
if equality 
\[
d (\psi(z),\psi(w))= d (z,w)
\]
holds on some open set, then $\psi$ belongs to $\aut(\D^2)$. 
\end{thm}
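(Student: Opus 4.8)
The plan is to reduce Theorem \ref{thm:5-2} to the indefinite Schwarz lemma already proved as Theorem \ref{thm:3-2} by the same Blaschke-composition trick used in the proof of Theorem \ref{thm:5-1}, with the extra bookkeeping needed to keep track of the class $\cQ(\D^2;2,1)$. Fix $z,w\in\D^2$ and set $\varphi_j=b_{\psi_j(w)}\circ\psi_j$ for $j=1,2$, so that $(\varphi_1,\varphi_2)$ is a holomorphic self-map of $\D^2$ with $\varphi_1(w)=\varphi_2(w)=0$. The first step is to verify that $(\varphi_1,\varphi_2,\varphi_1\varphi_2)$ belongs to $\cQ(\D^2;2,1)$: I would argue that composing a self-map by the automorphism-type map $(b_{w_1},b_{w_2})$ in the $z$-variable and by $(b_{\psi_1(w)},b_{\psi_2(w)})$ in the target does not leave the class $\cQ$, using Theorem \ref{thm:2-1} applied to the triplet $(\psi_1,\psi_2,\psi_1\psi_2)\in\cQ(\D^2;2,1)$ together with the fact that single-variable M\"obius maps $(b_{w_1}(z_1),b_{w_2}(z_2),b_{w_1}(z_1)b_{w_2}(z_2))$ lie in $\cQ(\D^2;2,1)$ (this last point follows as in Example \ref{ex:2-1}, since $T_{b_{w_j}}$ are doubly commuting isometries-minus-rank-one, hence contractions). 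Once this is established, Theorem \ref{thm:3-2} applies to $(\varphi_1,\varphi_2)$ in place of $\psi$: since $\varphi(w')=0$ at $w'=w$, a change of coordinates moving $w$ to the origin (again by a bidisk automorphism) puts us in the hypothesis $\varphi(0,0)=(0,0)$, and Theorem \ref{thm:3-2} yields
\[
0\le |\varphi_1(z)|^2+|\varphi_2(z)|^2-|\varphi_1(z)\varphi_2(z)|^2\le d(z,w)^2<1.
\]
Unwinding $|\varphi_j(z)|=|b_{\psi_j(w)}(\psi_j(z))|=|b_{\psi_j(w)}(\psi_j(z))|$ gives exactly $d(\psi(z),\psi(w))^2$ on the left, which proves the inequality.

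For the equality case, suppose $d(\psi(z),\psi(w))=d(z,w)$ on a nonempty open set $U\subseteq\D^2\times\D^2$. Fixing a $w$ for which the slice $\{z:(z,w)\in U\}$ is open and nonempty, the above reduction produces a self-map $\varphi=(\varphi_1,\varphi_2)$ with $\varphi(w)=0$, with $(\varphi_1,\varphi_2,\varphi_1\varphi_2)\in\cQ(\D^2;2,1)$, and with $|\varphi_1|^2+|\varphi_2|^2-|\varphi_1\varphi_2|^2=|b_{w_1}|^2+|b_{w_2}|^2-|b_{w_1}b_{w_2}|^2$ on that open set. After precomposing with the automorphism of $\D^2$ sending $0$ to $w$, this becomes precisely the equality hypothesis of Theorem \ref{thm:3-2}, so $\varphi$ (conjugated by automorphisms) equals $(e^{i\theta_1}z_1,e^{i\theta_2}z_2)$ or $(e^{i\theta_2}z_2,e^{i\theta_1}z_1)$. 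Transferring back through the M\"obius maps $b_{\psi_j(w)}$ in the target and $b_{w_j}$ in the source, one reads off that $\psi_j$ is a M\"obius function composed with a coordinate of the identity or the swap, i.e.\ $\psi\in\aut(\D^2)$.

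The main obstacle I anticipate is the first step: showing $(\varphi_1,\varphi_2,\varphi_1\varphi_2)\in\cQ(\D^2;2,1)$. Composition in the target by $(b_{\psi_1(w)},b_{\psi_2(w)})$ is not literally of the form covered by Corollary \ref{cor:2-1}, since there $\psi$ is composed \emph{inside} a triplet from $\cS$; here I need that $\cQ$ is preserved under \emph{post}-composition by a coordinatewise M\"obius transformation, which requires checking that $(b_{a_1}(\zeta_1),b_{a_2}(\zeta_2),b_{a_1}(\zeta_1)b_{a_2}(\zeta_2))$ again lies in $\cQ(\D^2;2,1)$ and that the kernel computation in the proof of Theorem \ref{thm:2-1} goes through with the roles of inner and outer composition exchanged. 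An alternative, perhaps cleaner, route is to bypass $\cQ$-membership entirely and instead apply Lemma \ref{lem:4-1} directly: $\varphi_1,\varphi_2$ lie in the INS-type submodule $b_{w_1}H^2+b_{w_2}H^2$, whose core operator is $b_{w_1}\otimes b_{w_1}+b_{w_2}\otimes b_{w_2}-(b_{w_1}b_{w_2})\otimes(b_{w_1}b_{w_2})$, so Lemma \ref{lem:4-1} gives the inequality with constant $\|T\|$, and the hypothesis $(\psi_1,\psi_2,\psi_1\psi_2)\in\cQ(\D^2;2,1)$ is exactly what forces $\|T\|\le 1$ (via Lemma \ref{lem:3-5} and the definition of $\cQ$, combined with the composition behaviour); this mirrors the structure of the proof of Theorem \ref{thm:5-1} but upgrades $\sqrt2$ to $1$. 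I would carry out the argument along whichever of these two routes makes the $\|T\|\le 1$ bound most transparent, and then handle the rigidity statement by the automorphism reduction to Theorem \ref{thm:3-2} as above.
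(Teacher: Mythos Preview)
Your proposal is essentially correct, and your ``alternative, cleaner route'' via Lemma~\ref{lem:4-1} is in fact exactly the paper's argument. The paper sets $\varphi_j=b_{\psi_j(w)}\circ\psi_j$, invokes Theorem~\ref{thm:2-1} to conclude $(\varphi_1,\varphi_2,\varphi_1\varphi_2)\in\cS(\D^2;2,1)$ (so $\|T\|\le 1$), and then applies Lemma~\ref{lem:4-1} with the INS submodule $b_{w_1}H^2+b_{w_2}H^2$. The equality case is dispatched exactly as you describe, by conjugating with bidisk automorphisms and quoting the rigidity clause of Theorem~\ref{thm:3-2}.

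The ``obstacle'' you anticipate is not one. Post-composing $\psi$ by the coordinatewise M\"obius map $(b_{a_1},b_{a_2})$ \emph{is} literally covered by Theorem~\ref{thm:2-1}: take the outer triplet to be $(b_{a_1}(\zeta_1),\,b_{a_2}(\zeta_2),\,b_{a_1}(\zeta_1)b_{a_2}(\zeta_2))$, which lies in $\cS(\D^2;2,1)\subset\cQ(\D^2;2,1)$ by Example~\ref{ex:2-1}, and take the inner self-map to be $\psi$ itself, for which $(\psi_1,\psi_2,\psi_1\psi_2)\in\cQ(\D^2;2,1)$ is the hypothesis. Since $b_{a_j}$ depends only on $\zeta_j$, one has $(b_{a_j}\circ\psi)(z)=b_{a_j}(\psi_j(z))=\varphi_j(z)$ and $(b_{a_1}b_{a_2})\circ\psi=\varphi_1\varphi_2$, so Theorem~\ref{thm:2-1} gives $(\varphi_1,\varphi_2,\varphi_1\varphi_2)\in\cP\cap\cQ=\cS$ directly. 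There is no need to reverse the roles of inner and outer composition or to redo the kernel computation.

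Your primary route through Theorem~\ref{thm:3-2} (after precomposing with the automorphism sending $0$ to $w$) also works and is a legitimate variant; it trades the Hilbert-module machinery of Lemma~\ref{lem:4-1} for an extra application of Theorem~\ref{thm:2-1} to handle the source-side automorphism. The paper's choice of Lemma~\ref{lem:4-1} is slightly more economical since the submodule $b_{w_1}H^2+b_{w_2}H^2$ already absorbs the vanishing at $w$ without any change of variable.
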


\begin{proof}
In this proof, we shall use the same notations as those in the proof of Theorem \ref{thm:5-1}, 
that is,
we set $\varphi_j=b_{\psi_j(w)}\circ \psi$ for $j=1,2$. 
Then, $(\varphi_1,\varphi_2,\varphi_1\varphi_2)$ belongs to $\cS(\D^2;2,1)$ 
by Theorem \ref{thm:2-1}. 
Moreover, since $\varphi_1$ and $\varphi_2$ belong to the submodule 
$b_{w_1}H^2+b_{w_2}H^2$, applying Lemma \ref{lem:4-1}, 
we have 
\begin{align*}
0
&\leq |\varphi_1(z)|^2+|\varphi_2(z)|^2-|\varphi_1(z)\varphi_2(z)|^2\\
&\leq |b_{w_1}(z_1)|^2+|b_{w_2}(z_2)|^2-|b_{w_1}(z_1)b_{w_2}(z_2)|^2\\
&< 1.
\end{align*}
Thus we have the first half. 
Further, 
combining the standard proof of the Schwarz-Pick inequality with Theorem \ref{thm:3-2}, 
we have the second half. 
\end{proof}

\begin{cor}
Let $f$ be a holomorphic function on $\D^2$. 
If $\| f\|_{\infty}\leq 1$, then 
\[
0\leq \left|\dfrac{f(z)-f(w)}{1-\overline{f(w)}f(z)}\right|^2
\leq d(z,w)
\]
for any $z$ and $w$ in $\D^2$. 
\end{cor}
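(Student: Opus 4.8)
The plan is to realize $f$ as the first coordinate of a degenerate holomorphic self-map of $\D^2$ and then quote Theorem~\ref{thm:5-2}. Set $\psi=(\psi_1,\psi_2)=(f,0)$; since $\|f\|_{\infty}\leq 1$ this is a holomorphic self-map of $\D^2$ (the only borderline case, $f$ a unimodular constant, makes the asserted left-hand side vanish and is trivial). Then $(\psi_1,\psi_2,\psi_1\psi_2)=(f,0,0)$, and the first step is to check that this triplet lies in $\cQ(\D^2;2,1)$: one has
\[
I-T_fT_f^{\ast}-T_0T_0^{\ast}+T_0T_0^{\ast}=I-T_fT_f^{\ast}\geq 0
\]
because $\|T_f\|=\|f\|_{\infty}\leq 1$. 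Hence the hypothesis of Theorem~\ref{thm:5-2} is satisfied and we obtain $0\leq d(\psi(z),\psi(w))\leq d(z,w)<1$ for all $z,w\in\D^2$.

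The second step is to unwind the quantity $d(\psi(z),\psi(w))$. Using the notation $b_{w_j}(z_j)=(z_j-w_j)/(1-\overline{w_j}z_j)$ and $\psi_2\equiv 0$, one has $b_{\psi_2(w)}(\psi_2(z))=b_0(0)=0$, so all cross terms and the second squared term drop out and
\[
d(\psi(z),\psi(w))^2=\bigl|b_{\psi_1(w)}(\psi_1(z))\bigr|^2
=\left|\frac{f(z)-f(w)}{1-\overline{f(w)}f(z)}\right|^2.
\]
Combining this identity with the estimate from Theorem~\ref{thm:5-2}, and using that $0\leq d(z,w)<1$ forces $d(z,w)^2\leq d(z,w)$, gives
\[
0\leq\left|\frac{f(z)-f(w)}{1-\overline{f(w)}f(z)}\right|^2=d(\psi(z),\psi(w))^2\leq d(z,w)^2\leq d(z,w),
\]
which is exactly the claimed inequality.

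There is no real obstacle in this argument; essentially all the work was already done in Theorem~\ref{thm:5-2}. The only points meriting a line of care are the verification that $(f,0,0)\in\cQ(\D^2;2,1)$ (immediate from $\|T_f\|\leq 1$) and the bookkeeping that $d$ degenerates to a single Blaschke factor when the second coordinate of $\psi$ is identically zero; the nonnegativity of the left-hand side is trivial, so the substantive content is the upper bound delivered by the reduction.
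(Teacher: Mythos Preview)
Your proof is correct and follows exactly the route the paper takes: set $\psi=(f,0)$, observe $(f,0,0)\in\cQ(\D^2;2,1)$ (this is precisely what the paper points to, citing the proof of Corollary~\ref{cor:3-1}), and apply Theorem~\ref{thm:5-2}. You supply a bit more detail than the paper does---in particular the unwinding $d(\psi(z),\psi(w))=\lvert b_{f(w)}(f(z))\rvert$ and the final step $d(z,w)^2\le d(z,w)$---but the argument is the same.
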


\begin{proof}
In the proof of Corollary \ref{cor:3-1}, we showed that $(f,0,0)$ belongs to $\cQ(\D^2;2,1)$.
\end{proof}

Although the next fact is known in more general context 
(for example, see Lemma 9.9 in Agler-McCarthy~\cite{AM}), 
it should be mentioned here.  
\begin{prop}\label{prop:5-1}
$d$ is a distance on $\D^2$. 
\end{prop}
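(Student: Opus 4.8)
The plan is to verify the three defining properties of a metric: non-negativity with $d(z,w)=0 \iff z=w$, symmetry, and the triangle inequality. Non-negativity is already recorded right before the statement, since $d(z,w)^2 = 1-(1-|b_{w_1}(z_1)|^2)(1-|b_{w_2}(z_2)|^2) \geq 0$; moreover this quantity vanishes exactly when $|b_{w_1}(z_1)|^2 = |b_{w_2}(z_2)|^2 = 0$, i.e. when $z_1=w_1$ and $z_2=w_2$, giving the identity of indiscernibles. Symmetry follows from the elementary identity $|b_{w_j}(z_j)| = |b_{z_j}(w_j)|$, which is the classical one-variable fact that the pseudohyperbolic distance on $\D$ is symmetric; substituting this into the formula for $d$ shows $d(z,w)=d(w,z)$.

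The substantive point is the triangle inequality. First I would isolate the one-variable ingredient: writing $\rho(a,b) = |b_b(a)| = \left|\frac{a-b}{1-\overline{b}a}\right|$ for the pseudohyperbolic metric on $\D$, I would use the standard strong form of its triangle inequality, namely
\[
\rho(a,c) \leq \frac{\rho(a,b)+\rho(b,c)}{1+\rho(a,b)\rho(b,c)},
\]
together with monotonicity of $t \mapsto \frac{s+t}{1+st}$ on $[0,1)$ for fixed $s\in[0,1)$. Then, setting $p_j = \rho(z_j,u_j)$, $q_j = \rho(u_j,w_j)$ for a third point $u=(u_1,u_2)\in\D^2$, I have $\rho(z_j,w_j) \leq \frac{p_j+q_j}{1+p_jq_j} =: r_j$, and I must show
\[
1 - (1-r_1^2)(1-r_2^2) \; \leq \; \Big(\sqrt{1-(1-p_1^2)(1-p_2^2)} + \sqrt{1-(1-q_1^2)(1-q_2^2)}\Big)^2.
\]
Because the left side is increasing in each $r_j$, and $r_j$ is built from $p_j,q_j$ by the one-variable law, this reduces to a purely real-variable inequality in the four parameters $p_1,p_2,q_1,q_2 \in [0,1)$.

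The cleanest route I expect is to change variables via $p_j = \tanh s_j$, $q_j = \tanh t_j$ with $s_j,t_j\geq 0$, so that $\frac{p_j+q_j}{1+p_jq_j} = \tanh(s_j+t_j)$ and $1-p_j^2 = \operatorname{sech}^2 s_j$. Then $1-(1-p_1^2)(1-p_2^2) = 1 - \operatorname{sech}^2 s_1 \operatorname{sech}^2 s_2$, and the desired bound becomes a statement comparing $\sqrt{1-\operatorname{sech}^2(s_1+t_1)\operatorname{sech}^2(s_2+t_2)}$ with $\sqrt{1-\operatorname{sech}^2 s_1\operatorname{sech}^2 s_2} + \sqrt{1-\operatorname{sech}^2 t_1\operatorname{sech}^2 t_2}$. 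This should follow by exhibiting the left-hand quantity as the value at a sum of a function that is subadditive along the relevant directions; concretely, I would try to show that $z\mapsto (z_1,z_2,z_1z_2)\in\cK$ followed by the indefinite form is compatible with an honest norm, or more directly verify the inequality by noting $1-\operatorname{sech}^2 a\operatorname{sech}^2 b = \tanh^2 a + \operatorname{sech}^2 a\tanh^2 b$ and estimating each piece using the one-variable triangle inequality for $\tanh$. \textbf{The main obstacle} is precisely this last real-variable inequality: the indefinite term $-|z_1z_2|^2$ makes $d$ not obviously a norm-induced distance, so the triangle inequality cannot be read off from convexity alone and must be reduced — as above — to a concrete two-parameter-family estimate, or else deduced by quoting the general Hilbert-module / Schur-class result alluded to in the parenthetical reference to Lemma 9.9 of Agler--McCarthy, which identifies $d$ with the distance attached to the unit ball $\Omega\cap\D^2$ of a suitable operator-space structure.
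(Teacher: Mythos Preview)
Your treatment of symmetry and of identity of indiscernibles is correct and matches the paper's. For the triangle inequality, your reduction via the strong one-variable pseudohyperbolic inequality
\[
\rho(z_j,w_j)\le \frac{p_j+q_j}{1+p_jq_j}
\]
is also exactly the paper's move. One simplification you omit is that $d$ is $\aut(\D^2)$-invariant, so the paper first moves the intermediate point to the origin; this is only cosmetic, since your $p_j,q_j$ then become $|z_j|,|w_j|$ and the target real-variable inequality is identical to the one the paper writes down.

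The genuine gap is that you stop precisely where the work lies: you do not prove the four-parameter inequality
\[
1-(1-r_1^2)(1-r_2^2)\le\Bigl(\sqrt{1-(1-p_1^2)(1-p_2^2)}+\sqrt{1-(1-q_1^2)(1-q_2^2)}\Bigr)^2.
\]
Your hyperbolic substitution only reformulates it, and the claim that the right-hand side is ``subadditive along the relevant directions'' is neither stated precisely nor argued; the function $(s_1,s_2)\mapsto\sqrt{1-\operatorname{sech}^2 s_1\operatorname{sech}^2 s_2}$ is not subadditive in any obvious sense, so this is not a step one can wave through. Quoting Agler--McCarthy would close the gap, but the paper explicitly sets out to give an independent argument, so that option does not match.

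What the paper actually does is finish by bare-hands algebra: writing $a=d(z,0)$, $b=d(0,w)$, it multiplies $(a+b)^2-d(z,w)^2$ through by $(1+r_1s_1)^2(1+r_2s_2)^2$, uses the bound you already have on $d(z,w)^2$, and reduces the resulting expression to
\[
(a+b)^2-1+(1-a^2)(1-b^2)=2ab+a^2b^2\ge 0.
\]
No hyperbolic substitution or subadditivity principle is needed; the missing piece in your proposal is this direct computation.
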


\begin{proof}
We shall give a proof different from that of Lemma 9.9 in Agler-McCarthy~\cite{AM}. 
Let $z$ and $w$ be two points in $\D^2$. 
We denote $z=(z_1,z_2)$ and $w=(w_1,w_2)$. 
First, it is trivial that $d(z,w)=d(w,z)$ by the definition of $d$. 
Second, let $d_j(z_j,w_j)$ be the usual pseudo-hyperbolic distance 
between $z_j$ and $w_j$ in $\D$. 
Then we have
\begin{equation}\label{eq:5-1}
1-(d(z,w))^2=\{1-(d_1(z_1,w_1))^2\}\{1-(d_2(z_2,w_2))^2\}.
\end{equation}
Hence, if $d(z,w)=0$ then $d_j(z_j,w_j)=0$ for each $j=1,2$, 
that is, $z_1=w_1$ and $z_2=w_2$. 
Third, we shall show the triangle inequality. 
Since $d$ is invariant under the action of $\aut(\D^2)$, 
it suffices to show that 
\[
d(z,w)\leq d(z,0)+d(0,w).
\]
We set $|z_j|=r_j$ and $|w_j|=s_j$ for $j=1,2$. 
Then the inequality 
\begin{equation}\label{eq:5-2}
d_j(z_j,w_j)\leq \frac{r_j+s_j}{1+r_js_j}
\end{equation}
is well known, in fact, (\ref{eq:5-2}) is equivalent to the triangle inequality for $d_j$. 
Moreover we note that
\begin{equation}\label{eq:5-3}
1-(d(z,0))^2=1-(r_1^2+r_2^2-r_1^2r_2^2)=(1-r_1^2)(1-r_2^2).
\end{equation}
Then, it follows from (\ref{eq:5-1}), (\ref{eq:5-2}) and (\ref{eq:5-3}) that 
\begin{align*}
(d(z,w))^2&=1-\{1-(d_1(z_1,w_1))^2\}\{1-(d_2(z_2,w_2))^2\}\notag\\
&\leq 1-\left\{1- \left(\frac{r_1+s_1}{1+r_1s_1}\right)^2\right\}
\left\{1- \left(\frac{r_2+s_2}{1+r_2s_2}\right)^2\right\}\notag\\
&=1-\frac{(1-r_1^2)(1-s_1^2)(1-r_2^2)(1-s_2^2)}{(1+r_1s_1)^2(1+r_2s_2)^2}\notag\\
&=1-\frac{\{1-(d(z,0))^2\}\{1-(d(0,w))^2\}}{(1+r_1s_1)^2(1+r_2s_2)^2}.
\end{align*}
Hence, we have
\begin{align*}
&\quad\ (1+r_1s_1)^2(1+r_2s_2)^2
\{(d(z,0)+d(0,w))^2-(d(z,w))^2\}\\
&\geq (1+r_1s_1)^2(1+r_2s_2)^2\left\{(d(z,0)+d(0,w))^2
-\left(1-\frac{\{1-(d(z,0))^2\}\{1-(d(0,w))^2\}}{(1+r_1s_1)^2(1+r_2s_2)^2}\right)\right\}\\
&=(1+r_1s_1)^2(1+r_2s_2)^2\left\{(d(z,0)+d(0,w))^2-1\right\}+\{1-(d(z,0))^2\}
\{1-(d(0,w))^2\}\\
&\geq (d(z,0)+d(0,w))^2-1+\{1-(d(z,0))^2\}\{1-(d(0,w))^2\}\\
&=2d(z,0)d(0,w)+(d(z,0)d(0,w))^2\\
&\geq 0.
\end{align*}
Therefore we have 
\[
(d(z,0)+d(0,w))^2-(d(z,w))^2\geq 0.
\]
This concludes the proof. 
\end{proof}

\end{document}